\newtheorem{theorem}{Theorem}
\newtheorem{proposition}{Proposition}
\newtheorem{estimate}{}
\theoremstyle{definition}
\newtheorem*{remark}{\sc Remark}
\newtheorem*{remarks}{\sc Remarks}
\newtheorem*{example*}{\bf Example}
\newcommand{\loc}{{\rm loc}}
\newcommand{\Real}{{\rm Re}\,}
\newcommand{\Imag}{{\rm Im}\,}
\newcommand{\clos}{{\rm clos}}
\def\expandafter\normalsize\expandafter{%
    \normalsize
    \setlength\abovedisplayshortskip{8pt}
    \setlength\belowdisplayshortskip{8pt}
}
\begin{document}

\title[$L^p$-theory of $-\Delta + b\cdot\nabla$, and Feller processes]
{A new approach to the $L^p$-theory of $-\Delta + b\cdot\nabla$, and its \\ applications to Feller processes \\ with general drifts}

\author{Damir Kinzebulatov}

\address{{\scriptsize Department of Mathematics, University of Toronto, 40 St.~George Str., Toronto, ON, M5S2E4, Canada}}

\email{damir.kinzebulatov@utoronto.ca}

\keywords{Elliptic operators, regularity of solutions, Feller semigroups}

\subjclass[2010]{35J15, 47D07 (primary), 35J75 (secondary)}

\begin{abstract}
We develop a detailed regularity theory of $-\Delta +b \cdot \nabla$ in $L^p(\mathbb R^d)$, for  a wide class of vector fields. 
The $L^p$-theory allows us to construct associated strong Feller process in $C_\infty(\mathbb R^d)$.
Our starting object is an 
operator-valued function, 
 which, we prove, 
coincides with the resolvent of an operator realization of $-\Delta + b\cdot \nabla$, the generator of a holomorphic $C_0$-semigroup on $L^p(\mathbb R^d)$. Then the very form of the operator-valued function yields crucial information about smoothness of the
domain of the generator. 
\end{abstract}

\maketitle


\subsection{}
Let $\mathcal L^d$ be the Lebesgue measure on $\mathbb R^d$, $L^p=L^p(\mathbb R^d,\mathcal L^d)$ 
and $W^{1,p}=W^{1,p}(\mathbb R^d,\mathcal L^d)$ 
the standard Lebesgue 
and Sobolev 
spaces, $C^{0,\gamma}=C^{0,\gamma}(\mathbb R^d)$ the space of H\"{o}lder continuous functions ($0<\gamma<1$), $C_b=C_b(\mathbb R^d)$ the space of bounded continuous functions endowed with the $\sup$-norm, 
$C_\infty \subset C_b$ the closed subspace of functions vanishing at infinity,
$\mathcal W^{\alpha,p}$, $\alpha>0$, the Bessel  space endowed with norm $\|u\|_{p,\alpha}:=\|g\|_p$,  
$u=(1-\Delta)^{-\frac{\alpha}{2}}g$, $g \in L^p$, and $\mathcal W^{-\alpha,p}$ the dual of $\mathcal W^{\alpha,p}$.
We denote by $\mathcal B(X,Y)$ the space of bounded linear operators between complex Banach spaces $X \rightarrow Y$, endowed with operator norm $\|\cdot\|_{X \rightarrow Y}$;  $\mathcal B(X):=\mathcal B(X,X)$. Set $\|\cdot\|_{p \rightarrow q}:=\|\cdot\|_{L^p \rightarrow L^q}$.


For each $p \geqslant 1$ and $p'=p/(p-1)$, 
by $\langle u,v\rangle$ we denote the $( L^p,L^{p'})$ pairing, so that
$$
\langle u,v\rangle = \langle u\bar{v}\rangle :=\int_{\mathbb R^d}u\bar{v}d\mathcal L^d \qquad (u \in L^p, v \in L^{p'}).
$$

\subsection{}
Consider the following classes of vector fields.



\medskip

(1) We say that a $b:\mathbb R^d \rightarrow \mathbb C^d$ belongs to $\mathbf{F}_\delta$, the class of form-bounded vector fields, and write $b  \in \mathbf{F}_\delta$, if 
$b$ is $\mathcal{L}^d$-measurable, and there exists $\lambda = \lambda_\delta > 0$ such that 
$$
\| b (\lambda - \Delta )^{-\frac{1}{2}} \|_{2 \to 2} \leqslant \sqrt{\delta}. 
$$

(2) We say that a $b:\mathbb R^d \rightarrow \mathbb C^d$ belongs to the Kato class $\mathbf{K}^{d+1}_\delta$, and write $ b \in \mathbf{K}^{d+1}_\delta$, if  $b$ is $\mathcal{L}^d$-measurable, and there exists $\lambda = \lambda_\delta > 0$ such that
$$
\| b (\lambda - \Delta)^{-\frac{1}{2}} \|_{1 \to 1} \leqslant \delta.
$$


\smallskip

(3)  We say that a $b: \mathbb{R}^d \to  \mathbb{C}^d$ belongs to $\mathbf{F}_\delta^{\scriptscriptstyle \frac{1}{2}}$, the class of \textit{weakly} form-bounded vector fields, and write $b \in \mathbf{F}_\delta^{\scriptscriptstyle \frac{1}{2}}$, if 
$b$ is $\mathcal{L}^d$-measurable, and there exists $\lambda = \lambda_\delta > 0$ such that
$$
\| |b|^\frac{1}{2} (\lambda - \Delta)^{-\frac{1}{4}} \|_{2 \to 2} \leqslant \sqrt{\delta}.
$$



$\text{Simple examples show:}\quad \mathbf{F}_{\delta_1} - \mathbf{K}^{d+1}_\delta \neq \varnothing$
$
\text { and } \mathbf{K}^{d+1}_{\delta_1} - \mathbf{F}_\delta \neq \varnothing \quad \text{for any $\delta, \delta_1 > 0 $;}
$
\medskip
$$\mathbf{K}_\delta^{d+1} \subsetneq \mathbf{F}_\delta^{\scriptscriptstyle \frac{1}{2}}, \qquad 
\mathbf{F}_{\delta_1} \subsetneq \mathbf{F}_\delta^{\scriptscriptstyle \frac{1}{2}}\; \text{ for  
$\delta = \sqrt{\delta_1} $} 
\text{ by Heinz inequality \cite{He}} \footnote{In fact, at least for $b \in L^{d,\infty}$ ($d\geqslant 3$), $\delta<\sqrt{\delta_1}$, see \cite[Corollary 2.9]{KPS}.};$$ 
$$\biggl( b \in  \mathbf{F}_{\delta_1}^{~} \text{ and } \mathsf{f} \in \mathbf{K}^{d+1}_{\delta_2} \biggr) \Longrightarrow \biggl( b + \mathsf{f} \in \mathbf{F}^\frac{1}{2}_{\delta}, \; \sqrt{\delta} = \sqrt[4]{\delta_1} + \sqrt{\delta_2} \biggr).
$$

\medskip

Denote
$$
m_d := \pi^{\frac{1}{2}} (2e)^{-\frac{1}{2}} d^\frac{d}{2} (d-1)^{\frac{1-d}{2}}, \qquad c_p:= pp'/4. 
$$ 
The main results of our paper:

\begin{theorem}[$L^p$-theory]
\label{thm1}

Let $d \geqslant 3$ and $b:\mathbb{R}^d \to \mathbb{C}^d.$ Assume that  $ b \in \mathbf{F}^\frac{1}{2}_\delta$, $m_d \delta < 1.$ Then, for every $p \in \mathcal I:=\bigl(\frac{2}{1+\sqrt{1-m_d\delta}},\frac{2}{1-\sqrt{1-m_d\delta}}\bigr),$  there exists a $C_0$-semigroup $e^{-t \Lambda_p(b)}$ in $L^p$ such that

{\rm (\textit{i})}\; The resolvent set $\rho(-\Lambda_p(b))$ contains the half-plane $\mathcal O:=\{\zeta \in \mathbb C: \Real\,\zeta \geqslant \kappa_d\lambda_\delta\}$, $\kappa_d := \frac{d}{d-1}$, and the resolvent admits the representation:
$$
(\zeta + \Lambda_p(b))^{-1} = \Theta_p(\zeta, b), \quad \zeta \in \mathcal O,
$$
where
\begin{equation}
\label{R_p}
\Theta_p(\zeta, b):= (\zeta - \Delta)^{-1} - Q_p (1 + T_p)^{-1} G_p, 
\end{equation}
the operators $Q_p, G_p, T_p \in \mathcal B(L^p)$, 
\begin{align*}
\|G_p \|_{p \to p} \leqslant C_1 | \zeta |^{-\frac{1}{2 p^\prime}}, \; \|Q_p\|_{p} \leqslant C_2 | \zeta |^{-\frac{1}{2}-\frac{1}{2 p}}, \; \| T_p\|_{p} \leqslant m_d c_p \delta < 1,
\end{align*}
$$
\;G_p \equiv G_p(\zeta, b) := b^\frac{1}{p} \cdot \nabla (\zeta - \Delta)^{-1}, \quad b^{\frac{1}{p}}:=|b|^{\frac{1}{p}-1}b,
$$
$Q_p$, $T_p$ are the extensions by continuity of densely defined (on $\mathcal{E} :=  \bigcup_{\epsilon >0} e^{-\epsilon|b|} L^p$) operators
\begin{align*}
& Q_p |_{\mathcal E}\equiv Q_p(\zeta, b)|_{\mathcal E}: = (\zeta -\Delta)^{-1} |b|^{\frac{1}{p^\prime}}, \;
& T_p|_{\mathcal E} \equiv T_p(\zeta, b)|_{\mathcal E}:= b^\frac{1}{p} \cdot \nabla (\zeta -\Delta)^{-1} |b|^\frac{1}{p^\prime}.
\end{align*}

\medskip

{\rm (\textit{ii})} \; $e^{-t\Lambda_p(b)}$ is holomorphic: there is a constant $C_p$ such that
\[
\|(\zeta + \Lambda_p(b))^{-1} \|_{p \to p} \leqslant C_p |\zeta|^{-1}, \quad \zeta \in \mathcal O.
\]

{\rm (\textit{iii})} \; For each $1 \leqslant r<p<q$ and $\zeta \in \mathcal O,$ define  
\begin{align*}
&G_{p}(r) \equiv G_{p}(r,\zeta,b) := b^\frac{1}{p} \cdot \nabla (\zeta - \Delta)^{-\frac{1}{2} - \frac{1}{2r}}, \quad  G_{p}(r) \in \mathcal B(L^p),\\
&Q_{p}(q) \equiv Q_{p}(q,\zeta, b) := (\zeta -\Delta)^{- \frac{1}{2q'}} |b|^{\frac{1}{p'}} \text{ on } \mathcal E.
\end{align*}
The extension of $Q_p(q)$ by continuity  we denote again by $Q_{p}(q)$.
Then
\begin{align*}
& \Theta_p(\zeta, b)= (\zeta - \Delta)^{-1} - (\zeta - \Delta)^{-\frac{1}{2}-\frac{1}{2q}} Q_{p}(q) (1 + T_p)^{-1} G_{p}(r) (\zeta - \Delta)^{-\frac{1}{2r'}}, \quad \zeta \in \mathcal O;\\
\notag
&\Theta_p(\zeta,b) \text{ extends by continuity to an operator in }
 \mathcal B\bigl(\,\mathcal W^{-\frac{1}{r'},p}, \; \mathcal W^{1+\frac{1}{q},p}\,\bigr).
\end{align*}

{\rm (\textit{iv})}\; 
$D\bigl(\Lambda_p(b)\bigr)\subset \mathcal W^{1+\frac{1}{q},p}$ ($q>p$). In particular, if $m_d \delta <4\frac{d-2}{(d-1)^2}$, there exists $p \in \mathcal I$, $p>d-1$,  so $D\bigl(\Lambda_p(b)\bigr) \subset C^{0,\gamma}$, $\gamma<1-\frac{d-1}{p}$.

\medskip

{\rm (\textit{v})} \;Let $u \in D(\Lambda_p (b))$. Then 
\begin{align*}
& \langle \Lambda_p (b) u, v \rangle = \langle u, -\Delta v \rangle + \langle b\cdot\nabla u, v \rangle, \quad \;v \in C_c^\infty(\mathbb{R}^d); \\
& u \in \mathcal W^{2,1}_{\loc}.
\end{align*}

{\rm (\textit{vi})} \; $e^{-t \Lambda_p (b_n)} \overset{s}{\rightarrow} e^{-t \Lambda_p(b)} \text{ in $L^p$}$, \quad $t>0,$

\medskip
\noindent where $b_n:=
b$ if $|b| \leqslant n$,
$b_n:=n|b|^{-1}b$ if $|b|>n$, and
 $\Lambda_{p}(b_n):=-\Delta + b_n\cdot \nabla$, $D(\Lambda_p(b_n))=W^{2,p}$.

\medskip

{\rm (\textit{vii})} \; If $b$ is real-valued, then $e^{-t\Lambda_p(b)}$ is positivity preserving.

\medskip

{\rm (\textit{viii})} \; $\|e^{-t \Lambda_p(b)} \|_{p \to r} \leqslant c_{p,r} t^{-\frac{d}{2}(\frac{1}{p}-\frac{1}{r})}, \; 0 < t \leqslant 1, \; p < r.$

\end{theorem}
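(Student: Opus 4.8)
The plan is to construct the candidate resolvent $\Theta_p(\zeta,b)$ explicitly as in \eqref{R_p}, verify it is a bounded operator on $L^p$ with the claimed norm bounds, show it satisfies the resolvent identity, and only then deduce that it is the resolvent of a densely defined closed operator generating a holomorphic $C_0$-semigroup; all the finer smoothness, approximation, positivity and ultracontractivity assertions are then read off from the structure of $\Theta_p$. The technical heart is the operator-theoretic estimate $\|T_p\|_{p\to p}\le m_d c_p\delta<1$ for $p\in\mathcal I$. Here one writes $T_p = b^{1/p}\cdot\nabla(\zeta-\Delta)^{-1}|b|^{1/p'}$ and interpolates: at $p=2$ one has, by the weak form-boundedness hypothesis $b\in\mathbf F^{1/2}_\delta$ and the identity $\nabla(\zeta-\Delta)^{-1} = [\nabla(\zeta-\Delta)^{-1/2}]\cdot(\zeta-\Delta)^{-1/2}$, a bound of the shape $\|T_2\|_{2\to 2}\le C\,\delta$ with the sharp constant coming from the vector-valued Riesz transform bound $\|\nabla(\lambda-\Delta)^{-1/2}\|_{2\to2}\le 1$; the constant $c_p=pp'/4$ and $m_d$ arise from combining the $L^p$-boundedness of Riesz transforms with the Stampacchia-type interpolation of $|b|^{1/p}$ against $(\lambda-\Delta)^{-1/2}$. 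One should track $\lambda_\delta$ carefully so that the resolvent set contains $\mathcal O=\{\Real\,\zeta\ge\kappa_d\lambda_\delta\}$; the factor $\kappa_d=d/(d-1)$ reflects the loss in passing from the weak form bound to a genuine $L^p$ smallness. The estimates on $\|G_p\|_{p\to p}$ and $\|Q_p\|_{p}$ in terms of powers of $|\zeta|$ follow from scaling (dimensional analysis of $(\zeta-\Delta)^{-1}$) together with the same Riesz transform and Sobolev embedding inputs, and $(1+T_p)^{-1}$ exists by the Neumann series since $\|T_p\|<1$.

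Once $\Theta_p(\zeta,b)\in\mathcal B(L^p)$ is in hand, I would argue as follows. First, on the dense set $\mathcal E=\bigcup_{\epsilon>0}e^{-\epsilon|b|}L^p$ (where the truncations $b_n$ agree with $b$ in the limit and everything is classical), verify by direct computation that $\Theta_p(\zeta,b)(\zeta-\Delta+b\cdot\nabla)=\mathbf 1$ and the reverse composition, i.e.\ $\Theta_p$ inverts the algebraic operator $\zeta-\Delta+b\cdot\nabla$; the algebraic identity here is the factorization $b\cdot\nabla(\zeta-\Delta)^{-1} = b^{1/p}\cdot\nabla(\zeta-\Delta)^{-1}|b|^{1/p'}\cdot(\text{correction})$ that produces the geometric series $(1+T_p)^{-1}$. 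Then one checks the resolvent identity $\Theta_p(\zeta)-\Theta_p(\mu)=(\mu-\zeta)\Theta_p(\zeta)\Theta_p(\mu)$ for $\zeta,\mu\in\mathcal O$, which combined with injectivity of $\Theta_p(\zeta)$ (a consequence of $(\zeta-\Delta)^{-1}$ being injective and the perturbation being relatively small) identifies $\{\Theta_p(\zeta)\}$ as a pseudo-resolvent that is the genuine resolvent of a unique closed operator $\Lambda_p(b):=\zeta-\Theta_p(\zeta)^{-1}$. Density of the range of $\Theta_p(\zeta)$ gives density of $D(\Lambda_p(b))$; the bound in (ii), $\|(\zeta+\Lambda_p(b))^{-1}\|_{p\to p}\le C_p|\zeta|^{-1}$ uniformly on the sector, is obtained by rerunning the above estimates with $\zeta$ ranging over a sector around $\mathcal O$ (using that $(\zeta-\Delta)^{-1}$ is sectorial with $\|(\zeta-\Delta)^{-1}\|_{p\to p}\le|\zeta|^{-1}$ there, and that the smallness of $T_p$ persists), and this sectoriality is exactly the generation theorem for a holomorphic $C_0$-semigroup.

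For the remaining items: (iii) follows by inserting the commuting factors $(\zeta-\Delta)^{\pm(\frac12+\frac{1}{2q})}$ and $(\zeta-\Delta)^{\pm\frac{1}{2r'}}$ into \eqref{R_p} and re-estimating $G_p(r)$, $Q_p(q)$ with the same Riesz/Sobolev tools, giving mapping properties between Bessel spaces; (iv) is then immediate from $D(\Lambda_p(b))=\Theta_p(\zeta,b)L^p\subset\mathcal W^{1+\frac1q,p}$ and the Sobolev embedding $\mathcal W^{1+\frac1q,p}\hookrightarrow C^{0,\gamma}$ when $(1+\frac1q)p>d$, which the arithmetic condition $m_d\delta<4\frac{d-2}{(d-1)^2}$ is arranged to allow. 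For (v) one tests $\Lambda_p(b)u=\zeta u-\Theta_p(\zeta)^{-1}u$ against $v\in C_c^\infty$ and uses $u\in\mathcal W^{1+\frac1q,p}\subset W^{1,p}$ together with $b\in\mathbf F^{1/2}_\delta$ to make sense of $\langle b\cdot\nabla u,v\rangle$, and local $\mathcal W^{2,1}$-regularity comes from viewing $-\Delta u = \zeta u - \Lambda_p(b)u - b\cdot\nabla u \in L^1_{\loc}$ and applying interior elliptic regularity. Item (vi) is the key approximation statement: since $b_n\to b$ in the $\mathbf F^{1/2}_\delta$-sense (the form bounds are uniform in $n$ with the same $\delta$), the operators $T_p(\zeta,b_n)\to T_p(\zeta,b)$ strongly, hence $\Theta_p(\zeta,b_n)\to\Theta_p(\zeta,b)$ strongly in $\mathcal B(L^p)$, and strong resolvent convergence upgrades to strong semigroup convergence by the Trotter--Kato theorem. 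Finally (vii) follows because each $e^{-t\Lambda_p(b_n)}$ is positivity preserving (classical, real drift) and the property passes to the strong limit, and (viii) is the $L^p\to L^r$ smoothing: it follows from the Nash/Sobolev-type bound for the truncated semigroups $\|e^{-t\Lambda_p(b_n)}\|_{p\to r}\le c_{p,r}t^{-\frac d2(\frac1p-\frac1r)}$ (proved via the heat kernel bound for $-\Delta+b_n\cdot\nabla$, uniform in $n$ thanks to the form bound) passed to the limit using (vi). The main obstacle I anticipate is pinning down the sharp constant in $\|T_p\|_{p\to p}\le m_d c_p\delta$ with the correct geometric factor $m_d$ and the correct interval $\mathcal I$ of admissible $p$ — this is where the interplay between the weak form-boundedness, the $L^p$-boundedness of Riesz transforms, and the symmetric splitting of $|b|=|b|^{1/p}|b|^{1/p'}$ must be orchestrated with no slack.
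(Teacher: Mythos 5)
The overall architecture you propose — construct $\Theta_p$, estimate its pieces, verify the pseudo-resolvent identity, identify it as a resolvent, then read off (\textit{iii})--(\textit{viii}) — is indeed the paper's architecture, and (\textit{iii})--(\textit{viii}) are handled essentially the way you describe. However, the two technical pillars that make the construction go through are not the ones you reach for, and as stated both would leave gaps.

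First, the bound $\|T_p\|_{p\to p}\leqslant m_d c_p\delta$: you propose to get it by interpolation from the $p=2$ case together with $L^p$-boundedness of Riesz transforms and a Stampacchia-type argument. That is not what is done, and it is not clear it can be made to produce the sharp constant $c_p=pp'/4$ (complex interpolation between $p=2$, where the bound is $\delta$, and an endpoint would generically give a constant of the form $\delta^{\theta}$ times an endpoint bound, not $c_p\delta$). The paper's argument is different and more direct: (a) a \emph{pointwise kernel domination}
$|\nabla(\zeta-\Delta)^{-1}(x,y)|\leqslant m_d(\kappa_d^{-1}\Real\zeta-\Delta)^{-\frac12}(x,y)$
(estimate \eqref{lem_lambda_est}), which is strictly stronger than any operator-norm Riesz transform bound and produces the explicit constant $m_d$ together with the shift factor $\kappa_d$ that defines $\mathcal O$; and (b) the Liskevich--Semenov inequality for symmetric Markov generators $A=(\mu-\Delta)^{1/2}$: for $0\leqslant u\in D(A_r)$, $v:=u^{r/2}\in D(A^{1/2})$ and $c_r^{-1}\|A^{1/2}v\|_2^2\leqslant\langle A_r u,u^{r-1}\rangle$, which is where $c_r=rr'/4$ comes from and which, combined with the definition of $\mathbf F^{1/2}_\delta$, gives $\||b|^{1/p}(\mu-\Delta)^{-1/2}|b|^{1/p'}\|_{p\to p}\leqslant c_p\delta$ on $\mathcal E$. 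Your sketch would need to be replaced by this two-step argument (or an equivalent one); the ``Riesz transforms + Stampacchia interpolation'' route is not developed and is not known to give the constant.

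Second, the identification of $\Theta_p(\zeta,b)$ with $(\zeta+\Lambda_p(b))^{-1}$: you propose to verify directly on $\mathcal E=\bigcup_{\epsilon>0}e^{-\epsilon|b|}L^p$ that $\Theta_p(\zeta,b)(\zeta-\Delta+b\cdot\nabla)=\mathbf 1$ and its reverse. This step is not legitimate as stated: for $b$ only in $\mathbf F^{1/2}_\delta$, the expression $(-\Delta+b\cdot\nabla)f$ is not defined on $\mathcal E$ ($e^{-\epsilon|b|}g$ need not even be weakly differentiable), and $\Lambda_p(b)$ has not yet been defined, so there is nothing to invert. The paper circumvents this by working entirely with the bounded truncations $b_n$: it proves that $\Theta_p(\zeta,b_n)$ is a pseudo-resolvent via an explicit algebraic telescoping identity (Proposition \ref{lem_15}, using the Neumann series for $(1+T_p)^{-1}$ and the first resolvent identity for $(\zeta-\Delta)^{-1}$ spliced in the middle of each summand), identifies $\Theta_p(\zeta,b_n)$ with $(\zeta+\Lambda_p(b_n))^{-1}$ where $\Lambda_p(b_n)=-\Delta+b_n\cdot\nabla$ is a genuine operator on $W^{2,p}$, proves strong convergence $\Theta_p(\zeta,b_n)\overset{s}{\to}\Theta_p(\zeta,b)$ and the uniform approach to the identity $\mu\Theta_p(\mu,b_n)\overset{s}{\to}1$, and only then invokes the Trotter approximation theorem to \emph{define} $\Lambda_p(b)$. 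Your plan does mention Trotter--Kato for (\textit{vi}), but it is needed already to get (\textit{i}); without routing the whole construction through $b_n$, the resolvent identification step has no rigorous foundation for unbounded $b$. You would also need to supply the algebraic argument for the pseudo-resolvent identity itself, which you state but do not prove and which is nontrivial.
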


Let 
$$
\eta(x):=\left\{
\begin{array}{ll}
c\exp\left(\frac{1}{|x|^2-1}\right)& \text{ if } |x|<1, \\
0, & \text{ if } |x| \geqslant 1,
\end{array}
\right.
$$
where $c$ is adjusted to $\int_{\mathbb R^d} \eta(x)dx=1$. Define the standard mollifier  $$\eta_\varepsilon(x):=\frac{1}{\varepsilon^{d}}\eta\left(\frac{x}{\varepsilon}\right), \quad  \varepsilon>0, \;\;x\in \mathbb R^d.$$

\begin{theorem}[$C_\infty$-theory]
\label{thm2}
Let $d \geqslant 3$. Assume that $$b:\mathbb R^d \rightarrow \mathbb R^d, \quad
b \in \mathbf{F}_\delta^{\scriptscriptstyle \frac{1}{2}}, \quad 
m_d \delta <4\frac{d-2}{(d-1)^2}.$$ 
Then for every $\tilde{\delta}>\delta$ satisfying $m_d \tilde{\delta}<4\frac{d-2}{(d-1)^2}$ there exist 
$\varepsilon_n>0$, $\varepsilon_n\downarrow 0$, such that 
$$
\tilde{b}_{n}:=\eta_{\varepsilon_n} \ast b_n \in C^\infty(\mathbb R^d,\mathbb R^d) \cap \mathbf{F}_{\tilde{\delta}}^{\scriptscriptstyle \frac{1}{2}}, \quad n=1,2,\dots,
$$
and

\smallskip

{\rm(\textit{i})}
$$e^{-t\Lambda_{C_\infty}(b)}:=\text{\small $s\text{-}C_\infty\text{-}$}\lim_n e^{-t\Lambda_{C_\infty}(\tilde{b}_n)}, \quad t>0,
$$
determines a positivity preserving contraction $C_0$-semigroup on $C_\infty$, \\ where $\Lambda_{C_\infty}(\tilde{b}_n):=-\Delta+\tilde{b}_n\cdot \nabla$, $D(\Lambda_{C_\infty}(\tilde{b}_n))=C^2 \cap C_\infty$.

\medskip

{\rm(\textit{ii})}~(Strong Feller property)~$(\mu+\Lambda_{C_\infty}(b))^{-1}[L^p \cap C_\infty] \subset  C^{0,\alpha}$,  
\smallskip

\noindent $\mu>0$, $p \in \bigl(d-1,\frac{2}{1-\sqrt{1-m_d\delta}}\bigr)$, $\alpha < 1-\frac{d-1}{p}$.

\smallskip

{\rm(\textit{iii})}~The
integral kernel $e^{-t\Lambda_{C_\infty}(b)}(x,y)$ {\rm(}$x,y \in \mathbb R^d${\rm)} of $e^{-t\Lambda_{C_\infty}(b)}$ determines the
(sub-Markov) transition probability function of a strong Feller process.
\end{theorem}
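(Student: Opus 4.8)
The plan is to deduce Theorem \ref{thm2} from the $L^p$-theory of Theorem \ref{thm1}, applied \emph{uniformly} to the smooth bounded approximants $\tilde b_n$, together with the Trotter--Kato approximation theorem and the standard passage from a Feller semigroup to a Feller process. First I would check that mollification does not spoil weak form-boundedness, up to an arbitrarily small loss of constant: since $|b_n|\le|b|$ pointwise one has $b_n\in\mathbf{F}_\delta^{\scriptscriptstyle \frac{1}{2}}$ with the same $\lambda_\delta$, and since $|\eta_\varepsilon\ast b_n|\le\eta_\varepsilon\ast|b|$, a translation–convexity estimate (translations are $L^2$-isometries commuting with $(\lambda_\delta-\Delta)^{-\frac14}$, together with $\eta_\varepsilon\ge0$, $\int\eta_\varepsilon=1$) bounds the $\mathbf{F}^{\scriptscriptstyle\frac12}$-constant of $\eta_\varepsilon\ast b_n$ by that of $b$; hence one may pick $\varepsilon_n\downarrow0$ with $\tilde b_n\in C^\infty\cap\mathbf{F}_{\tilde\delta}^{\scriptscriptstyle\frac12}$ (all with a common $\lambda$) and, simultaneously, $\tilde b_n\to b$ in $L^1_{\loc}$. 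For each fixed smooth bounded $\tilde b_n$, classical theory furnishes the Feller semigroup $e^{-t\Lambda_{C_\infty}(\tilde b_n)}$ (positivity preserving contraction, $C_0$ on $C_\infty$) and the holomorphic $L^p$-semigroup $e^{-t\Lambda_p(\tilde b_n)}$, consistent on $L^p\cap C_\infty$. Since all $\tilde b_n$ lie in the one class $\mathbf{F}_{\tilde\delta}^{\scriptscriptstyle\frac12}$ with $m_d\tilde\delta<4\frac{d-2}{(d-1)^2}<1$, Theorem \ref{thm1} applies to each with constants ($C_1,C_2,c_p$, the bound $m_dc_p\tilde\delta<1$ on $\|T_p\|$, the ultracontractivity constants $c_{p,r}$) depending only on $d,\tilde\delta,p,q$ — i.e.\ \emph{uniformly in $n$}.

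For part (i): choosing $\tilde\delta$ close enough to $\delta$ one fixes $p\in\mathcal I_{\tilde\delta}$ with $p>d-1$ and then $q>p$ with $1+\tfrac1q>\tfrac dp$, so that $\mathcal W^{1+\frac1q,p}\hookrightarrow C_\infty\cap C^{0,\alpha}$, $\alpha<1-\tfrac{d-1}{p}$. Using the representation $\Theta_p(\zeta,\tilde b_n)=(\zeta-\Delta)^{-1}-Q_p(1+T_p)^{-1}G_p$, the Neumann series for $(1+T_p)^{-1}$ (uniformly convergent since $\|T_p\|<1$ uniformly in $n$), the explicit forms of $G_p,Q_p,T_p$, and $\tilde b_n\to b$ in $L^1_{\loc}$ with the uniform bounds, I would show $G_p(\zeta,\tilde b_n)\to G_p(\zeta,b)$, $Q_p(\zeta,\tilde b_n)\to Q_p(\zeta,b)$, $T_p(\zeta,\tilde b_n)\to T_p(\zeta,b)$ strongly in $\mathcal B(L^p)$, hence $\Theta_p(\mu,\tilde b_n)\to\Theta_p(\mu,b)$ strongly in $L^p$ (this is in essence Theorem \ref{thm1}(vi), extended to the mollified truncations). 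To upgrade to convergence in $C_\infty$: for $f\in L^p\cap C_\infty$, $\{\Theta_p(\mu,\tilde b_n)f\}_n$ is bounded in $\mathcal W^{1+\frac1q,p}$ by Theorem \ref{thm1}(iii), hence locally equicontinuous, so the $L^p$-limit is also a locally uniform limit; a uniform (in $n$) tail estimate — equi-decay at spatial infinity of $e^{-t\Lambda_p(\tilde b_n)}$, drawn from the ultracontractivity bound Theorem \ref{thm1}(viii) together with the exponential decay of the kernel of $(\mu-\Delta)^{-1}$ — then promotes this to $C_\infty$-norm convergence, so $(\mu+\Lambda_{C_\infty}(\tilde b_n))^{-1}f=\Theta_p(\mu,\tilde b_n)f\to\Theta_p(\mu,b)f$ in $C_\infty$. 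Since $L^p\cap C_\infty$ is dense in $C_\infty$ and the resolvents are equibounded, this convergence extends to all of $C_\infty$; the limiting pseudoresolvent has dense range (the standard verification), so Trotter--Kato yields a generator $\Lambda_{C_\infty}(b)$ of a $C_0$-semigroup on $C_\infty$ with $e^{-t\Lambda_{C_\infty}(\tilde b_n)}\overset{s}{\rightarrow}e^{-t\Lambda_{C_\infty}(b)}$, and positivity preservation and contractivity pass to the limit.

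Part (ii) is then short: for $f\in L^p\cap C_\infty$ the $L^p$-limit $\Theta_p(\mu,b)f=(\mu+\Lambda_p(b))^{-1}f$ and the $C_\infty$-limit $(\mu+\Lambda_{C_\infty}(b))^{-1}f$ of $\Theta_p(\mu,\tilde b_n)f$ must coincide $\mathcal L^d$-a.e.; since $m_d\delta<4\frac{d-2}{(d-1)^2}$ one may take $p\in(d-1,\tfrac{2}{1-\sqrt{1-m_d\delta}})$, and Theorem \ref{thm1}(iv) gives $D(\Lambda_p(b))\subset C^{0,\alpha}$, $\alpha<1-\tfrac{d-1}{p}$, so $(\mu+\Lambda_{C_\infty}(b))^{-1}[L^p\cap C_\infty]\subset C^{0,\alpha}$. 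For part (iii): the semigroup of (i) is a Feller semigroup, hence by the classical correspondence it is the transition semigroup of a sub-Markov c\`adl\`ag strong Markov process; Theorem \ref{thm1}(viii), uniform in $n$, shows the $e^{-t\Lambda_p(\tilde b_n)}$ are integral operators with nonnegative $L^\infty$-kernels, giving the limit semigroup a nonnegative density $e^{-t\Lambda_{C_\infty}(b)}(x,y)$. The strong Feller property $e^{-t\Lambda_{C_\infty}(b)}[\text{bounded Borel}]\subset C_b$ then follows from (ii) by splitting $t=\tfrac t2+\tfrac t2$: for bounded Borel $f$, $g:=e^{-\frac t2\Lambda_{C_\infty}(b)}f$ is bounded Borel, while $x\mapsto e^{-\frac t2\Lambda_{C_\infty}(b)}(x,y)$ is locally equicontinuous (apply (ii) to approximate identities) and, by the kernel bound, suitably dominated, so dominated convergence carries continuity in $x$ through $e^{-\frac t2\Lambda_{C_\infty}(b)}g(x)=\int e^{-\frac t2\Lambda_{C_\infty}(b)}(x,y)\,g(y)\,dy$.

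I expect the crux to be the upgrade from $L^p$- to $C_\infty$-convergence of the approximating resolvents — equivalently, the $n$-uniform tightness / equi-decay at spatial infinity of $e^{-t\Lambda_p(\tilde b_n)}$ — and, relatedly, extracting from Theorem \ref{thm1}(viii) enough kernel regularity and decay for the strong Feller statement: the interior Sobolev–Hölder regularity is supplied verbatim by Theorem \ref{thm1}(iii)--(iv), whereas the behaviour at infinity must be recovered by combining ultracontractivity with the decay of the free resolvent kernel, keeping all constants independent of $n$. The remaining ingredients — form-boundedness under mollification and the Trotter--Kato / Feller-process packaging — are routine.
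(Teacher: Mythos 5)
Your overall outline — apply Theorem~\ref{thm1} uniformly to the smooth bounded approximants $\tilde b_n$, pass to the limit via the Trotter--Kato theorem, and package the resulting Feller semigroup as a process — coincides with the paper's strategy, as do your proofs of mollification stability of $\mathbf{F}_{\tilde\delta}^{\scriptscriptstyle\frac12}$ and of part (ii) (identification of the $L^p$- and $C_\infty$-resolvents on $L^p\cap C_\infty$ plus Theorem~\ref{thm1}(\textit{iv})). However, the step you yourself single out as the crux — upgrading strong $L^p$-convergence of $\Theta_p(\mu,\tilde b_n)$ to strong $C_\infty$-convergence — is where your argument has a genuine gap, and where the paper does something different and cleaner.

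You propose: local equicontinuity from uniform boundedness of $\Theta_p(\mu,\tilde b_n)f$ in $\mathcal W^{1+\frac1q,p}$, plus an ``equi-decay at spatial infinity'' drawn from the ultracontractivity bound Theorem~\ref{thm1}(\textit{viii}) and the decay of the free resolvent kernel. But ultracontractivity is a smoothing estimate ($\|e^{-t\Lambda_p}\|_{p\to r}\lesssim t^{-\alpha}$), not a spatial-localization estimate; it does not obviously yield tightness of $\{\Theta_p(\mu,\tilde b_n)f\}_n$ at infinity, and the embedding $\mathcal W^{1+\frac1q,p}\hookrightarrow C_\infty$ is not globally compact, so bounded $+$ locally uniformly convergent does not give $C_\infty$-norm convergence. (One \emph{can} rescue your route by interpolation: strong $L^p$-convergence plus a uniform $\mathcal W^{1+\frac1q,p}$-bound gives convergence in $\mathcal W^{s',p}$ for every $s'<1+\frac1q$, and choosing $s'$ with $s'p>d$ one still has $\mathcal W^{s',p}\hookrightarrow C_\infty$; but that is not the argument you wrote.) The paper sidesteps the issue entirely by exploiting the factored representation of Theorem~\ref{thm1}(\textit{iii}): since $\Theta_p(\mu,\cdot)=(\mu-\Delta)^{-1}-(\mu-\Delta)^{-\frac12-\frac1{2q}}\bigl[Q_p(q)(1+T_p)^{-1}G_p(r)(\mu-\Delta)^{-\frac1{2r'}}\bigr]$ and $(\mu-\Delta)^{-\frac12-\frac1{2q}}:L^p\to C_\infty$ is a single bounded operator for $p>d-1$ and $q$ close to $p$, strong $L^p$-convergence of the bracketed factors immediately transfers to strong $C_\infty$-convergence of $\Theta_p(\mu,\tilde b_n)$ — no compactness or tightness is needed. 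The same factored form also yields the paper's explicit verification of the second Trotter--Kato condition (Proposition~\ref{lem_4}: $\mu\Theta_p(\mu,\tilde b_n)\to 1$ in $C_\infty$ uniformly in $n$), which in your writeup is replaced by a one-line appeal to ``density of range of the limiting pseudoresolvent'' that, while admissible as an alternative form of Trotter--Kato, glosses over the consistency identification $(\mu+\Lambda_{C_\infty}(\tilde b_n))^{-1}|_{\mathcal S}=\Theta_p(\mu,\tilde b_n)|_{\mathcal S}$ that the paper establishes in Proposition~\ref{prop_id}.
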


\begin{remarks}

1.~Theorem \ref{thm1} allows us to move the problem 
of convergence in $C_\infty$ (in Theorem \ref{thm2}(\textit{i}))
to $L^p$, a space having much weaker topology (locally).

2.~In place of $\tilde{b}_n$ in Theorem \ref{thm2}, one can take (without changing the proof) $$\hat{b}_n:=\eta_{\varepsilon_n} \ast (\mathbf{1}_nb) \in
C_0^\infty(\mathbb R^d,\mathbb R^d) \cap \mathbf{F}_{\tilde{\delta}}^{\scriptscriptstyle \frac{1}{2}}, \quad m_d \tilde{\delta} <4\frac{d-2}{(d-1)^2}, \quad n=1,2,\dots,
$$
where 
$\mathbf{1}_n$
is the characteristic function of $\{x \in \mathbb R^d: |b(x)| \leqslant m_n, |x| \leqslant n\}$, $m_n<n$ for all $n$,  $m_n \uparrow \infty$ as $n \uparrow \infty$.


\end{remarks}

\subsection{On the existing results prior to our work.}
\label{sect_existing}

First, it had been known for a long time, see [KS], that, for $b: \mathbb{R}^d \to \mathbb{R}^d$, $d \geqslant 3$, and $b \in \mathbf{F}_\delta,$

(i) \textit{(The basic fact)} $D(\Lambda_p(b)) \subset W^{1, j p}$ \textit{for every} $p \in (d-2, 2/\sqrt{\delta}), \; j=\frac{d}{d-2},$ provided that $0<\delta<\min\{1, (\frac{2}{d-2})^2\}.$

(ii) \textit{If, in addition to the assumptions in (i), $| b | \in L^2 + L^\infty,$ then 
$$\text{{\small $s$-$C_\infty$-}}\lim_n e^{-t \Lambda_{C_\infty} (b_n)}$$
exists uniformly in each finite interval of $t\geqslant 0,$ and hence determines a strongly Feller semigroup on} $C_\infty.$

\begin{remark} The additional (to $| b | \in L^2_\loc$) assumption $| b | \in L^2 + L^\infty$  in (ii)  was removed in \cite{Ki}.
\end{remark}

\begin{theorem}[Yu.~A.~Semenov]
\label{thm3}
Let $b: \mathbb{R}^d \to  \mathbb{R}^d, \;d \geqslant 3.$ 

$\mathbf{a})$ If $b \in \mathbf{K}^{d+1}_\delta, \;m_d \delta < 1$, then, for each $p \in [1, \infty),$ $s$-$L^p$-$\lim_n e^{-t \Lambda_p(b_n)}$ exists uniformly on each finite interval of $t \geqslant 0,$ and hence determines a $C_0$-semigroup $e^{-t \Lambda_p(b)}.$

$e^{-t \Lambda_p(b)}$ is a quasi-bounded positivity preserving $L^\infty$-contraction $C_0$- semigroup;
\[
\|e^{-t \Lambda_r(b)}\|_{r \to q} \leqslant c_{d, \delta}\; t^{-\frac{d}{2}(\frac{1}{r}-\frac{1}{q})}  \text{ for all } 0 <t \leqslant 1,\; 1 \leqslant r < q \leqslant \infty;
\]
The resolvent set $\rho(-\Lambda_p(b))$ contains the half-plane $\mathcal O$,
\[
(\zeta + \Lambda_p(b))^{-1} = \Theta_p(\zeta, b), \;\; \zeta \in \mathcal O,
\]
\begin{align*}
& \Theta_p(\zeta, b):= (\zeta - \Delta)^{-1} - (\zeta-\Delta)^{-\frac{1}{2}}S_p (1 + T_p)^{-1} G_p, \\
&S_p:=(\zeta-\Delta)^{-\frac{1}{2}}|b|^{\frac{1}{p'}}, \quad G_p:=b^{\frac{1}{p}}\cdot \nabla (\zeta-\Delta)^{-1}, \quad T_p:=b^{\frac{1}{p}}\cdot \nabla (\zeta-\Delta)^{-1}|b|^{\frac{1}{p'}};\\
& \Theta_p(\zeta,b) \in \mathcal B(L^p,\mathcal W^{1,p}); \\
& D(\Lambda_p(b)) \subset \mathcal{W}^{1, \;p}. \text{ In particular, for }p > d, \; D(\Lambda_p(b)) \subset C^{0, \;\alpha},\; \alpha = 1 -\frac{d}{p};\\
& \langle \Lambda_p (b) f, g \rangle = \langle \nabla f, \nabla g \rangle + \langle b\cdot\nabla f, g \rangle, \quad \quad f \in D(\Lambda_p (b)), \;g \in C_c^\infty(\mathbb{R}^d).
\end{align*}

$\mathbf{b})$ If $b \in \mathbf{F}_\delta^\frac{1}{2}, \;\delta < 1,$ then, for each $p \in [2, \infty),$ 
{\small $s$-$L^p$-}$\lim_n e^{-t \Lambda_p(b_n)}$ exists uniformly on each finite interval of $t\geqslant 0,$ and hence determines a $C_0$-semigroup $e^{-t \Lambda_p(b)}.$ 

\smallskip

$e^{-t \Lambda_p(b)}$ is a quasi-bounded positivity preserving $L^\infty$-contraction $C_0$- semigroup.

\smallskip

$\|e^{-t \Lambda_r(b)}\|_{r \to q} \leqslant c_{d, \delta}\; t^{-\frac{d}{2}(\frac{1}{r}-\frac{1}{q})}$  for all $0 <t \leqslant 1,\; 2 \leqslant r < q \leqslant \infty.$

\smallskip

$D(\Lambda_2(b)) \subset W^{\frac{3}{2},2}.$

\smallskip

$\langle \Lambda_2 (b) f, g \rangle = \langle \nabla f, \nabla g \rangle + \langle b\cdot\nabla f, g \rangle, \quad \quad f \in D(\Lambda_2 (b)), \;g \in C_c^\infty(\mathbb{R}^d).$

\end{theorem}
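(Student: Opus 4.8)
The plan is to obtain both parts by regularization. For the bounded fields $b_n$ the operator $\Lambda_p(b_n)=-\Delta+b_n\cdot\nabla$ with domain $W^{2,p}$ is, by classical bounded (relative bound $0$) perturbation of $-\Delta$, the generator of a positivity preserving, $L^\infty$-contractive $C_0$-semigroup on $L^p$, consistent across $p$; in fact these semigroups are Markov (the constant function is killed), so positivity and $L^\infty$-contractivity are automatic. The whole content is therefore to produce resolvent bounds uniform in $n$ and in $\zeta\in\mathcal O$, and then to pass to the limit $n\to\infty$. The two parts differ only in the base space where the uniform estimates are run. For $b\in\mathbf K^{d+1}_\delta$ one works from $L^1$, and the estimate propagates to every $L^p$, $1\leqslant p<\infty$, because the Kato condition is an $L^1$ (equivalently, by duality, an $L^\infty$) condition that interpolates with no loss of constant. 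For $b\in\mathbf F^\frac{1}{2}_\delta$ one works from $L^2$, where the form bound $\||b|^\frac{1}{2}(\lambda-\Delta)^{-\frac{1}{2}}|b|^\frac{1}{2}\|_{2\to2}\leqslant\delta$ (a consequence of the class definition) makes $-\Delta+b\cdot\nabla$ a sectorial form perturbation; the range $p\geqslant2$ is then reached by interpolating the $L^2$ bounds against $L^\infty$-contractivity, and one cannot go below $p=2$ by duality because the formal adjoint $-\Delta-b\cdot\nabla-(\nabla\cdot b)$ carries an uncontrolled zeroth-order term.

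The analytic core is the resolvent identity together with the right factorization. From $(\zeta+\Lambda_p(b_n))^{-1}=(\zeta-\Delta)^{-1}-(\zeta-\Delta)^{-1}\,b_n\cdot\nabla\,(\zeta+\Lambda_p(b_n))^{-1}$ and $b_n\cdot\nabla=|b_n|^\frac{1}{p'}\,b_n^\frac{1}{p}\cdot\nabla$ one solves algebraically (first on the dense set $\mathcal E=\bigcup_{\epsilon>0}e^{-\epsilon|b|}L^p$, then extending by continuity) to obtain exactly $\Theta_p(\zeta,b_n)$, \emph{provided} $\|T_p(\zeta,b_n)\|_{p\to p}<1$. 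Establishing $\|T_p(\zeta,b_n)\|_{p\to p}\leqslant m_d\delta<1$, uniformly in $n$ and $\zeta\in\mathcal O$, is the heart of the matter: write $\nabla(\zeta-\Delta)^{-1}=\nabla(\zeta-\Delta)^{-\frac{1}{2}}(\zeta-\Delta)^{-\frac{1}{2}}$ and use $(\zeta-\Delta)^{-s}=\Gamma(s)^{-1}\int_0^\infty t^{s-1}e^{-\zeta t}e^{t\Delta}\,dt$ to majorize the kernel of $T_p$ pointwise by a product of kernels of $|b_n|^\frac{1}{p}(\lambda-\Delta)^{-\frac{1}{2}}$-type operators, the passage through $\nabla$ costing a factor controlled by $|z|t^{-1}e^{-|z|^2/8t}\leqslant c\,t^{-\frac{1}{2}}$; the exact constant $m_d$ is precisely what falls out of optimizing this chain together with the Gaussian normalizations and the $d$-dependence of $\|\nabla(\lambda-\Delta)^{-\frac{1}{2}}\|_{1\to1}$, while $\delta$ enters through the class hypothesis. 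In part a this gives $\|T_1\|_{1\to1}\leqslant m_d\delta$ and $\|T_\infty\|_{\infty\to\infty}\leqslant m_d\delta$, and Stein interpolation along the analytic family $z\mapsto|b_n|^{1-z}\frac{b_n}{|b_n|}\cdot\nabla(\zeta-\Delta)^{-1}|b_n|^{z}$ (whose value at $z=\frac{1}{p'}$ is $T_p$) yields $\|T_p\|_{p\to p}\leqslant m_d\delta$ for all $p\in[1,\infty]$; in part b the $L^2$ estimate plus $L^\infty$-contractivity of the $b_n$-semigroups delivers the required smallness on $[2,\infty)$. The same technique gives $\|G_p(\zeta,b_n)\|_{p\to p}\leqslant C|\zeta|^{-\frac{1}{2p'}}$ and $\|(\zeta-\Delta)^{-\frac{1}{2}}S_p(\zeta,b_n)\|_{p\to p}=\|(\zeta-\Delta)^{-1}|b_n|^\frac{1}{p'}\|_{p\to p}\leqslant C|\zeta|^{-\frac{1}{2}-\frac{1}{2p}}$, so $\Theta_p(\zeta,b_n)$ is bounded on $L^p$, uniformly in $n$ and $\zeta\in\mathcal O$ (using $\|(\zeta-\Delta)^{-1}\|_{p\to p}\leqslant(\Real\zeta)^{-1}\leqslant(\kappa_d\lambda_\delta)^{-1}$ on $\mathcal O$), and coincides with the resolvent there.

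It remains to pass to the limit and to read off the consequences. Since $|b_n|\uparrow|b|$ one has $\|b_n(\lambda-\Delta)^{-\frac{1}{2}}\|_{1\to1}\leqslant\|b(\lambda-\Delta)^{-\frac{1}{2}}\|_{1\to1}$ (and likewise for the $\mathbf F^\frac{1}{2}$-seminorm), and $b_n\to b$ in $L^1_\loc$; hence $T_p(\zeta,b_n),G_p(\zeta,b_n),S_p(\zeta,b_n)$ converge strongly on $L^p$, uniformly on $\mathcal O$, so $\Theta_p(\zeta,b_n)\to\Theta_p(\zeta,b)$ strongly and the limit is a pseudoresolvent with dense range and trivial kernel. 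By the Trotter--Kato theorem this produces a quasi-bounded $C_0$-semigroup $e^{-t\Lambda_p(b)}$ with resolvent $\Theta_p(\zeta,b)$, with $e^{-t\Lambda_p(b_n)}\to e^{-t\Lambda_p(b)}$ uniformly on compact $t$-intervals (equicontinuity from the uniform resolvent bounds); positivity and $L^\infty$-contractivity pass to the limit by a.e.\ convergence along a subsequence and lower semicontinuity of $\|\cdot\|_\infty$. The $L^r\to L^q$ smoothing is first obtained uniformly in $n$ — e.g.\ by Nash/Moser iteration off the resolvent bounds, or by Coulhon's interpolation of $L^1\to L^1$ boundedness with $L^2\to L^\infty$ ultracontractivity — and then inherited. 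The regularity statements come straight from the structure of $\Theta_p$: $(\zeta-\Delta)^{-1}$ maps $L^p$ into $\mathcal W^{2,p}$, while $(\zeta-\Delta)^{-\frac{1}{2}}S_p(1+T_p)^{-1}G_p$ maps $L^p$ into $\mathcal W^{1,p}$ because $(\zeta-\Delta)^{-\frac{1}{2}}:L^p\to\mathcal W^{1,p}$ and $S_p,(1+T_p)^{-1},G_p\in\mathcal B(L^p)$; thus $D(\Lambda_p(b))=\Theta_p(\zeta,b)L^p\subset\mathcal W^{1,p}$, and $\mathcal W^{1,p}\hookrightarrow C^{0,1-\frac{d}{p}}$ for $p>d$. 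For part b one uses instead $(\zeta-\Delta)^{\frac{1}{4}}S_2=(\zeta-\Delta)^{-\frac{1}{4}}|b|^\frac{1}{2}\in\mathcal B(L^2)$ to upgrade to $D(\Lambda_2(b))\subset\mathcal W^{\frac{3}{2},2}$. Finally, $\langle\Lambda_p(b)f,g\rangle=\langle\nabla f,\nabla g\rangle+\langle b\cdot\nabla f,g\rangle$ for $g\in C_c^\infty$ follows by writing the identity for $b_n$ (where $\langle-\Delta f,g\rangle=\langle\nabla f,\nabla g\rangle$) and passing to the limit, using $\nabla f_n\to\nabla f$ in $L^p$ (from $D\subset\mathcal W^{1,p}$ and the uniform bounds) and $b_n\to b$ in $L^1_\loc$.

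The main obstacle is the sharp operator-norm bound $\|T_p(\zeta,b_n)\|_{p\to p}\leqslant m_d\delta$ with the \emph{exact} constant $m_d$, holding uniformly all the way down to $\Real\zeta=\kappa_d\lambda_\delta$: crude Schur/Young estimates give a strictly larger constant, so one has to track the heat-kernel optimizations carefully, and also verify that the ``solve for the resolvent'' step is legitimate on $\mathcal E$ before the continuity extension. A secondary difficulty, specific to part b, is that the extrapolation to $p\geqslant2$ must pass through $L^\infty$-contractivity rather than duality, and one has to check strong continuity (not merely boundedness) of the extrapolated semigroups on $L^p$ for $p<\infty$.
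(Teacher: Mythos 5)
Your overall plan tracks the paper's proof fairly closely — Miyadera and Riesz interpolation from $L^1$ for part a, an $L^2$ factorization plus Trotter--Kato for part b, with the heat-kernel majorization $|\nabla(\zeta-\Delta)^{-1}(x,y)|\leqslant m_d(\kappa_d^{-1}\Real\zeta-\Delta)^{-\frac{1}{2}}(x,y)$ as the source of the constant $m_d$. For part a, the difference is cosmetic: you get the $L^p$ bound $\|T_p\|_{p\to p}\leqslant m_d\delta$ by Stein interpolation of an analytic family, whereas the paper gets it directly from H\"older and the pointwise definition of $\mathbf{K}^{d+1}_\delta$; both are fine and give the same constant with no loss.

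The problems are in your part b, and they are conceptual rather than incidental. First, $b\in\mathbf{F}^{\frac{1}{2}}_\delta$ does \emph{not} make $b\cdot\nabla$ a sectorial form perturbation of $-\Delta$. Running your own chain, $|\langle b\cdot\nabla u,u\rangle|\leqslant\||b|^{\frac{1}{2}}\nabla u\|_2\||b|^{\frac{1}{2}}u\|_2\leqslant\delta\|(\lambda-\Delta)^{\frac{3}{4}}u\|_2\|(\lambda-\Delta)^{\frac{1}{4}}u\|_2$, which is not $\lesssim\|\nabla u\|_2^2+\lambda\|u\|_2^2$: the condition $\mathbf{F}^{\frac{1}{2}}_\delta$ is weaker than form boundedness by design, and the strict inclusion $\mathbf{F}_{\delta_1}\subsetneq\mathbf{F}^{\frac{1}{2}}_\delta$ ($\delta=\sqrt{\delta_1}$) is one of the paper's starting observations. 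The paper even states (Comments, item 2) that for $\mathbf{F}^{\frac{1}{2}}_\delta$ one cannot appeal to standard perturbation theory. Second, the claim that the $L^2$ estimate plus $L^\infty$-contractivity ``delivers the required smallness on $[2,\infty)$'' for $T_p$ is false: under $\mathbf{F}^{\frac{1}{2}}_\delta$ one only gets $\|T_p\|_{p\to p}\leqslant m_d c_p\delta$ with $c_p=pp'/4$ (see Theorem~\ref{thm1} and Proposition~\ref{est_prop}), which grows unboundedly in $p$; there is no bound $\|T_\infty\|_{\infty\to\infty}\leqslant\delta$ to interpolate against. The extension of the semigroup to $L^p$, $p\geqslant 2$, in Theorem~\ref{thm3}b is obtained by Riesz interpolation of the \emph{semigroup} between $L^2$ and $L^\infty$, not from a $T_p$ bound, and the domain characterization in part b is asserted only at $p=2$.

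Finally, you omit the device the paper actually uses to identify $\Theta_2(\zeta,b_n)$ with the resolvent in part b: the first (invertible) representation $\Theta_2=(\zeta-\Delta)^{-\frac{3}{4}}(1+H^*S)^{-1}(\zeta-\Delta)^{-\frac{1}{4}}$, which makes $\Theta_2(\zeta,b_n)$ manifestly a bijection and yields $\Theta_2(\zeta,b_n)^{-1}|_{W^{2,2}}=(\zeta+\Lambda_2(b_n))|_{W^{2,2}}$ directly. Your ``solve algebraically on $\mathcal E$'' route only shows that the genuine resolvent, \emph{where it exists}, equals $\Theta_2$; extending from large $\zeta$ to all of $\mathcal O$ still requires the pseudo-resolvent identity and a dense-range argument, which you gesture at but do not supply. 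This gap can be closed (the paper does so in Propositions~\ref{lem_15}--\ref{lem_20} in the proof of Theorem~\ref{thm1}), but you should either invoke the invertible factorization as the paper does, or spell out the pseudo-resolvent step explicitly.
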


 We outline the proof of Semenov's results.

\begin{proof}
$\mathbf{a})$ Indeed, for all $\zeta$ with $\Real \;\zeta >0,$
 \[
 |\nabla (\zeta - \Delta)^{-1}(x,y)|\leqslant m_d (\kappa_d^{-1} \Real \zeta -\Delta)^{-\frac{1}{2}}(x,y) \text{ pointwise on } \mathbb{R}^d \times \mathbb{R}^d
 \]
 (see \eqref{lem_lambda_est} in the Appendix).
 Therefore, for $b \in \mathbf{K}^{d+1}_\delta,$
 \[ 
 \| b \cdot \nabla (\zeta - \Delta)^{-1} \|_{1 \to 1}\leqslant m_d \delta, \quad\quad \Real \;\zeta \geqslant \kappa_d \lambda,
 \]
and so by the Miyadera perturbation theorem, the operator $- \Lambda_1(b) := \Delta - b \cdot \nabla$ of domain $D(\Lambda_1(b)) = \mathcal{W}^{2,1}$ is the generator of a quasi-bounded $C_0$ semigroup on $L^1$ \textit {whenever} $m_d \delta < 1.$

Clearly 
$b_n \in \mathbf{K}^{d+1}_\delta,$ $\| b_n \cdot \nabla (\zeta - \Delta)^{-1} \|_{1 \to 1}\leqslant m_d \delta,$ and, for $m_d \delta < 1$ and every $f \in D(\Lambda_1(b)),$ $\Lambda_1 (b_n) f  \overset{s} \rightarrow \Lambda_1 (b) f$ by the Dominated Convergence Theorem. (See, if needed, \eqref{A0}.) The latter easily implies the strong resolvent and the semigroup convergence of $\Lambda_1(b_n)$ to $\Lambda_1(b).$

Then, for each $n=1,2,\dots,$ the semigroups $e^{- t \Lambda_1(b_n)}, t>0,$ are positivity preserving $L^\infty$-contractions, and so is $e^{-t \Lambda_1(b)}.$ The bounds
\[
\|e^{-t \Lambda_1(b)} \|_{1 \to 1}\leqslant M e^{t \omega}, \; \omega = \kappa_d\lambda, \text{ and } \|e^{-t \Lambda_1(b)} f \|_\infty\leqslant \|f\|_\infty, \; f \in L^1 \cap L^\infty,
\]
yield via the Riesz interpolation theorem
\[
\|e^{-t \Lambda_1(b)} f \|_p\leqslant M^{1/p} e^{t \omega /p} \|f\|_p, \;\;f \in L^1 \cap L^\infty.
\]
Therefore, we obtain a family $\{e^{-t \Lambda_p(b)} \}_{1\leqslant p < \infty}$ of consistent $C_0$-semigroups by setting $e^{-t \Lambda_p(b)} := $ the extension by continuity in $L^p$ of $e^{-t \Lambda_1(b)} \mid L^1 \cap L^\infty.$

Next, for each $p \in [1, \infty)$ and all $f \in \mathcal{E} := \bigcup_{\epsilon > 0} e^{-\epsilon |b|} L^p,$ the inequality 
\[
 \| |b|^\frac{1}{p} (\lambda -\Delta)^{-\frac{1}{2}} |b|^\frac{1}{p^\prime} f\|_p\leqslant \delta \|f\|_p 
 \]
as well as inequality
\[
\| (|b|+\sqrt{\lambda})^\frac{1}{p} (\lambda -\Delta)^{-\frac{1}{2}} (|b|+\sqrt{\lambda})^\frac{1}{p^\prime} f\|_p\leqslant (1+\delta) \|f\|_p
\]
follow from the very definition of $\mathbf{K}^{d+1}_\delta$ using H\"older's inequality.
Note that the last inequality clearly implies that
\[
\||b|^\frac{1}{p}(\lambda -\Delta)^{-\frac{1}{2}} \|_{p \to p}\leqslant (1+\delta) \lambda^{-\frac{1}{2 p^\prime}},
\]  
and the first inequality implies that, for every $\zeta \in \mathcal O$, $p \in [1, \infty)$ and all $f \in \mathcal{E},$
\[
\|b^\frac{1}{p} \cdot \nabla (\zeta - \Delta)^{-1} \;|b|^\frac{1}{p^\prime} f \|_p\leqslant m_d \| |b|^\frac{1}{p} (\lambda -\Delta)^{-\frac{1}{2}} |b|^\frac{1}{p^\prime} |f| \|_p\leqslant m_d \delta \|f\|_p.
\]

Now, for every $p \in [1, \infty)$ and $\zeta \in \mathcal O$, we define operators $G_p, S_p, T_p$ acting in $ L^p$ by
\[
G_p = b^\frac{1}{p} \cdot \nabla(\zeta - \Delta)^{-1}, \; S_p =(\zeta - \Delta)^{-\frac{1}{2}}| b |^\frac{1}{p^\prime}, \; T_p = b^\frac{1}{p} \cdot \nabla(\zeta -\Delta)^{-1} \;|b|^\frac{1}{p^\prime}.
\]
It is seen that $G_p$ is bounded:
\[
\|G_p \|_{p \to p}\leqslant m_d \|b^\frac{1}{p}(\lambda - \Delta)^{-\frac{1}{2}} \|_{p \to p}\leqslant m_d(1+\delta)\lambda^{-\frac{1}{2 p^\prime}}.
\]
 $S_p$ and $T_p$ are densely defined (on $\mathcal{E}$) and, for all $ f \in \mathcal{E},$
\[
 \| S_p f \|_p\leqslant (1+\delta)^{-1} \lambda^{-\frac{1}{2 p}} \| f \|_p \text{ and } \| T_p f \|_p\leqslant m_d \delta \| f \|_p. 
\] 
Their extensions by continuity we denote again by $S_p, T_p.$

Next, we define an operator function $\Theta_p(\zeta, b)$ in $L^p$ by 
\[ 
\Theta_p(\zeta, b):=  (\zeta -\Delta)^{-1} - (\zeta -\Delta)^{-\frac{1}{2}} S_p\;(1 +T_p)^{-1} \;G_p \quad \quad \zeta \in \mathcal O.
\]
Obviously
\[
\Theta_p(\zeta, b) \in \mathcal{B}(L^p) \text{ and } \Theta_p(\zeta, b) \in \mathcal{B}(L^p, W^{1,p}).
\]
It is also seen that

 $(\zeta + \Lambda_1(b))^{-1} = \Theta_1 (\zeta, b),\;\;(\zeta + \Lambda_p(b))^{-1} \mid L^1 \cap L^p = \Theta_p(\zeta, b) \mid L^1 \cap L^p,$ and so
\[
(\zeta + \Lambda_p(b))^{-1} = \Theta_p(\zeta, b),  \quad \quad \zeta \in \mathcal O.
\]
The latter implies that $D(\Lambda_p(b)) \subset W^{1,p}$, for all $p\in [1,\infty)$. The main assertion is proved.

$\mathbf{b})$ Let $b \in \mathbf{F}_\delta^{\scriptsize \frac{1}{2}}$, $\delta<1$. Define $H=|b|^{\frac{1}{2}}(\zeta-\Delta)^{-\frac{1}{4}}, \; S=b^{\frac{1}{2}}\cdot\nabla(\zeta-\Delta)^{-\frac{3}{4}}$ and
\begin{align*}
\tag{$\ast$}
\label{first_repr_semenov}
\Theta_2(\zeta,b):= & (\zeta-\Delta)^{-\frac{3}{4}}(1 + H^*S)^{-1}(\zeta-\Delta)^{-\frac{1}{4}} \\
= & (\zeta - \Delta)^{-1} - (\zeta - \Delta)^{-\frac{3}{4}} H^* (1 + S H^*)^{-1} S ((\zeta - \Delta)^{-\frac{1}{4}}, \;\; \Real \zeta \geqslant \lambda.
\end{align*}
We have 
$$
\|H^*S\|_{2 \rightarrow 2}\leqslant \| H \|_{2 \rightarrow 2} \|S \|_{2 \rightarrow 2}\leqslant \|H \|_{2 \rightarrow 2}^2 \|\nabla(\zeta-\Delta)^{-\frac{1}{2}}\|_{2 \rightarrow 2} \leqslant \delta \text{ and } \|\Theta_2(\zeta,b)\|_{2 \rightarrow 2} \leqslant (1-\delta)^{-1}|\zeta|^{-1}. 
$$
Note that $ D(\Lambda_2(b_n)) = W^{2, 2}$ and, for all $\Real\; \zeta \geqslant \lambda,$ by the first representation of $\Theta_2(\zeta, b_n),$
\[
\Theta_2(\zeta, b_n)^{-1}|W^{2, 2} = (\zeta +\Lambda_2(b_n))|W^{2, 2}, \quad \Theta_2(\zeta, b_n)= (\zeta +\Lambda_2(b_n))^{-1}, 
\]
\[
\zeta \Theta_2(\zeta,b_n) \overset{s}\rightarrow 1 \text{ as } \zeta \uparrow \infty \text{ by the second representation of } \Theta_2(\zeta, b_n).
\]
\textit{Therefore, $\Theta_2(\zeta, b_n)$ is the resolvent of $-\Lambda_2(b_n).$} 

Since $\|\Theta_2(\zeta, b_n) \|_{2 \to 2} \leqslant (1-\delta)^{-1} |\zeta|^{-1},$ the semigroups $e^{-t \Lambda_2 (b_n)}$ are holomorphic and equi-bounded.

Finally, it is seen that $\Theta_2(\zeta, b_n) \overset{s}\rightarrow \Theta_2(\zeta, b)$ in $L^2$ on $\Real\; \zeta \geqslant \lambda,$ and  $\mu \;\Theta_2(\mu,b_n) \overset{s}\rightarrow 1$ in $L^2$ as $\mu \uparrow \infty$ uniformly in $n.$ Therefore, by the Trotter approximation theorem  {\small $s$-$L^2$-}$\lim_n e^{-t \Lambda_2(b_n)}$ exists and determines a $C_0$-semigroup in $L^2$. It is also clear that this semigroup is holomorphic and $L^\infty$-contractive. 
\end{proof}

\subsection{Comments}
\label{comm_sect}

1.~The fact that  $b:\mathbb R^d \rightarrow \mathbb R^d$ belongs to  $\mathbf{K}^{d+1}_\delta$ or $\mathbf{F}_\delta$ allows us to construct operator realizations of the formal differential operator $-\Delta +b\cdot \nabla$ as (minus) generators of strongly continuous semigroups in $L^p$ for some or all $p \in [1,\infty),$ $C_\infty$ and/or $C_b$, by means of general tools of the standard perturbation theory (e.g.~theorems of Miyadera \cite{Vo} or Phillips \cite{Ph}, respectively).

2.~Concerning the class $ \mathbf{F}^\frac{1}{2}_\delta$ 
 one can not appeal to the standard perturbation theory (in contrast to $\mathbf{K}^{d+1}_\delta$ and $ \mathbf{F}_\delta$) in order to properly characterize the domain of the generator $\Lambda_p(b)$. Indeed, the arguments in  \cite[p.~413-416]{S}  
 (repeated above in the proof of Theorem \ref{thm3}b) say nothing about $\mathcal{W}^{\alpha, p}$-smoothness of $D(\Lambda_p(b))$ for $p \neq 2$. These arguments rely on  \eqref{first_repr_semenov} that implies that $\Theta_2(\zeta,b)$  is invertible. 
The natural analogue of \eqref{first_repr_semenov} in $L^p$ is valid only for considerably smaller  
class of vector fields: $|b| \in L^{d,\infty}$, although leading to a slightly better smoothness result: $D(\Lambda_p(b)) \subset \mathcal W^{1+\frac{1}{p},p}$, cf.~Theorem \ref{thm1}(\textit{iv}).

\smallskip

3.~Theorem \ref{thm3} is a special case of our Theorem \ref{thm1}. Indeed, the constraints on $p$ and $\delta$ in Theorem \ref{thm1} come solely from the estimate on $\|T_p\|_{p \rightarrow p}$. Now, if $b \in \mathbf{F}^{\scriptscriptstyle \frac{1}{2}}_\delta,$ $\delta<1$, then
$$
\|T_2 \|_{2 \rightarrow 2} \leqslant \| H \|_{2 \rightarrow 2}\| H^* \|_{2 \rightarrow 2} \| \nabla (\zeta - \Delta)^{-\frac{1}{2}} \|_{2 \rightarrow 2} \leqslant \| H \|^2_{2 \rightarrow 2} \leqslant \delta<1.
$$
And if $b \in \mathbf{K}^{d+1}_\delta,\;m_d \delta < 1,$ then $\| T_p \|_{p \to p} < 1$ for \textit{all} $p \in [1, \infty),$ so that the interval $\mathcal I \ni p$ transforms into $[1, \infty)$, and a possible causal dependence of the properties of $D(\Lambda_p(b))$ on $\delta$ gets lost. The latter indicates the smallness of $\mathbf{K}^{d+1}_\delta$ as a subclass of $\mathbf{F}^{\scriptscriptstyle \frac{1}{2}}_\delta.$
\smallskip

4.~Both proofs of Theorem \ref{thm1} and Theorem \ref{thm3} are based on 
similar operator-valued functions,
although the arguments involved differ considerably. 

\smallskip

5.~Note that for $b \in \mathbf{K}^{d+1}_\delta$, $u \in D(\Lambda_1(b))$ satisfies $u \in \mathcal W^{2,1}$, for $b \in \mathbf{F}_\delta$ $u \in \mathcal W^{2,2}_{\loc}$ \cite{KS}, while in Theorem \ref{thm1}(\textit{v}) $u \in D(\Lambda_p(b))$, $p \in \mathcal I$, satisfies $u \in \mathcal W^{2,1}_{\loc}$.


%
6.~Let $b:\mathbb R^d \rightarrow \mathbb R^d$, $b \in \mathbf{F}_\delta^{\scriptstyle \frac{1}{2}}$, $m_d\delta<1$. Theorem \ref{thm1}(\textit{i}) and the argument in the proof of Theorem \ref{thm3}a (using the Riesz interpolation theorem) yields a consistent
 family of $C_0$-semigroups $e^{-t\Lambda_p(b)}$ on $L^p$, for all $p \in (\frac{2}{1+\sqrt{1-m_d\delta}},\infty)$.

\smallskip

7.~The author considers the assertion (\textit{iv}) of Theorem \ref{thm1} (the $\mathcal W^{1+\frac{1}{q},\,p}$-smoothness) as the main result of the paper.
Theorem \ref{thm1}, compared to \cite{KS} and Theorem \ref{thm3}a, covers the larger class of vector fields, and at the same time establishes stronger smoothness properties of $D(\Lambda_p(b))$: $D(\Lambda_p(b)) \subset \mathcal W^{1+\frac{1}{q},\,p}$, $p \in \mathcal I$ 
($q>p$), while in \cite{KS} $D(\Lambda_p(b)) \subset W^{1,jp}$, $jp \in (d, 2j/\sqrt{\delta})$, 
and in Theorem \ref{thm3}a $D(\Lambda_p(b)) \subset \mathcal W^{1,p}$, $p \in [1,\infty)$.

8.~The $C_\infty$-theory of operator $-\Delta+b\cdot\nabla$, $b \in \mathbf{F}_\delta^{\scriptscriptstyle \frac{1}{2}}$ (Theorem \ref{thm2}) follows  almost automatically from 
the $L^p$-theory (Theorem \ref{thm1}) (with $p>d-1$), in contrast to 
\cite{KS}, where the $C_\infty$-theory is obtained from the $L^p$-theory by running a specifically tailored Moser-type iterative procedure (see also \cite{Ki}).

\smallskip

9.~To the author's knowledge, Theorem \ref{thm1} and Theorem \ref{thm2} are the first results where $b$ can combine different kinds of singularities, e.g.~$(|x|-1)^{-\beta}$, $\beta<1$, and $|x|^{-1}$ (originally, the main motivation for this work).

\medskip

\textbf{Acknowledgements.~}\textit{I am deeply grateful to Yu.A.~Semenov for many important suggestions, and constant attention throughout this work.}

\section{Proof of Theorem \ref{thm1}}
\label{thm1_proof}

The method of proof.
We start with operator-valued function
\[
\Theta_p(\zeta,b):= (\zeta - \Delta)^{-1} - Q_p (1 + T_p)^{-1} G_p, \quad \zeta \in \mathcal O,
\]
defined in $L^p$ for each $p$ from the interval
\[
\mathcal{I} := \left]\frac{2}{1 + \sqrt{1-m_d \delta}}, \frac{2}{1 - \sqrt{1-m_d \delta}}\right[, \;\; m_d \delta <1,
\]
and step by step prove that, for $n=1,2,\dots$,
\[
\| \Theta_p(\zeta,b_n) \|_{p \to p}, \; \| \Theta_p(\zeta,b) \|_{p \to p} \leqslant c |\zeta|^{-1};
\]
\[
\Theta_p(\zeta,b_n) \text{ is a pseudo-resolvent};
\] 
\[
 \Theta_p(\zeta,b_n) \text{ coincides with the resolvent } R(\zeta, - \Lambda_p(b_n))=(\zeta+\Lambda_p(b_n))^{-1} \text{ on } \mathcal O;
\]
\[\Theta_p(\zeta,b_n) \overset{s}\rightarrow \Theta_p(\zeta, b) \text{ in } L^p \text{ as $n \uparrow \infty$};
\]
\[
\mu \;\Theta_p(\mu, b_n) \overset{s}\rightarrow 1 \text{ as } \mu \uparrow \infty  \text{ in } L^p \text{ uniformly in } n.
\]

All this combined leads to the conclusion: for each $p \in \mathcal{I}$ there is a holomorphic semigroup $e^{-t \Lambda_p(b)}$ in $L^p$ such that the resolvent $R(\zeta, -\Lambda_p(b))$ on $\zeta \in \mathcal O$ has the representation $\Theta_p(\zeta,b);$

$ \Theta_p(\zeta,b)$ can be written as $ABC,$ where $C \in \mathcal{B}(\mathcal{W}^{-\frac{1}{r^\prime}, \;p}, L^p),\; B \in \mathcal{B}(L^p), \;A \in \mathcal{B}(L^p, \mathcal{W}^{1 + \frac{1}{q}, \;p}),$
 $r < p < q$, $r^\prime = r/(r-1)$.

\medskip

Propositions \ref{est_prop}-\ref{lem_20} below constitute the core of the proof of Theorem \ref{thm1}.

\begin{proposition}
\label{est_prop}

Let  $p \in \mathcal I$.

\smallskip

{\rm(\textit{i})} \; For every $1 \leqslant r<p<q \leqslant \infty$ and $\zeta \in \mathcal O~(=\{\zeta \in \mathbb C: \Real\,\zeta \geqslant \kappa_d\lambda\}, \;\lambda=\lambda_\delta)$
define operators on $L^p$
$$
Q_{p}(q) =(\zeta - \Delta)^{-\frac{1}{2q'}}|b|^\frac{1}{p'}, \quad G_{p}(r) = b^\frac{1}{p} \cdot \nabla (\zeta -\Delta)^{-\frac{1}{2}-\frac{1}{2r}}, \quad T_p = b^\frac{1}{p} \cdot \nabla (\zeta -\Delta)^{-1}|b|^{\frac{1}{p'}}.
$$
Then $G_p(r)$ is bounded: $\|G_{p}(r) \|_{p \to p} \leqslant K_{1,r}.$ $Q_{p}(q)$ and $T_p$ are densely defined (on $\mathcal E$),
and for all $ f \in \mathcal{E}$,
$$
\| Q_{p}(q) f \|_p \leqslant K_{2,q} \| f \|_p ,
 $$
\begin{equation}
\label{T_est}
 \| T_p f \|_p \leqslant m_dc_p\delta  \| f \|_p,  \quad  m_dc_p\delta <1, \quad c_p = p p^\prime/4 . 
\end{equation}
Their extensions by continuity we denote again by $Q_{p}(q)$, $T_p.$

\smallskip

{\rm(\textit{ii})} \;Set $G_p =b^{\frac{1}{p}}\cdot\nabla(\zeta-\Delta)^{-1}$, $Q_p =(\zeta-\Delta)^{-1} |b|^{\frac{1}{p'}}$, $P_p =|b|^{\frac{1}{p}}(\zeta-\Delta)^{-1}$. The operator $Q_p$ is densely defined on $\mathcal E$.
There exist constants $C_i$, $i=1,2,3$, such that
\begin{equation}
\label{G_Q_est}
\| G_p \|_{p \to p} \leqslant C_1 |\zeta|^{-\frac{1}{2p^\prime}}, \quad \| P_p \|_{p \to p} \leqslant C_3 |\zeta|^{-\frac{1}{2}-\frac{1}{2p^\prime}}, \quad \|Q_p f \|_{p} \leqslant C_2 |\zeta|^{-\frac{1}{2}-\frac{1}{2p}}\|f\|_p \quad (f \in \mathcal E),  \quad \zeta \in \mathcal O.
\end{equation}
The extension of $Q_p$ by continuity we denote again by $Q_{p}$.

\smallskip

{\rm(\textit{iii})} \; $\|\Theta_{p}(\zeta,b_n)\|_{p \rightarrow p} \leqslant C_p|\zeta|^{-1}$, $\zeta \in \mathcal O$.

\end{proposition}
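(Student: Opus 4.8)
The plan is to split each of $G_p(r),Q_p(q),T_p,G_p,Q_p,P_p$ into a product of a factor that absorbs the weight $|b|$ and a purely smoothing factor $(\zeta-\Delta)^{-\beta}$, and to estimate the two kinds of factors separately. The smoothing factors I would handle by the functional calculus for the sectorial operator $-\Delta$ on $L^p$ (equivalently, the Mikhlin multiplier theorem for $(\zeta+|\xi|^2)^{-\beta}$ after the rescaling $\xi\mapsto|\zeta|^{1/2}\xi$): $\|(\zeta-\Delta)^{-\beta}\|_{p\to p}\leqslant c_\beta|\zeta|^{-\beta}$ ($\beta>0$) uniformly on $\mathcal O$, together with the comparison $\|(\lambda_\delta-\Delta)^{\beta}(\mu-\Delta)^{-\beta}\|_{p\to p}\leqslant c'_\beta$ uniformly in $\mu\geqslant\lambda_\delta$. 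The weight-absorbing factors I would reduce by the pointwise kernel domination from the Appendix, $|\nabla(\zeta-\Delta)^{-1}(x,y)|\leqslant m_d(\kappa_d^{-1}\Real\zeta-\Delta)^{-1/2}(x,y)$, together with its elementary companions $|(\zeta-\Delta)^{-\alpha}(x,y)|\leqslant(\Real\zeta-\Delta)^{-\alpha}(x,y)$ and $|\nabla(\zeta-\Delta)^{-1/2-\alpha}(x,y)|\leqslant m_d(\kappa_d^{-1}\Real\zeta-\Delta)^{-\alpha}(x,y)$ (write resolvents and Riesz potentials as Laplace transforms of the heat semigroup and use $|e^{-\zeta t}|=e^{-\Real\zeta t}$); these turn every weight-absorbing factor into one of $|b|^{1/s}(\mu-\Delta)^{-\gamma}$ or $|b|^{1/p}(\mu-\Delta)^{-\gamma}|b|^{1/p'}$ with $\mu:=\kappa_d^{-1}\Real\zeta\geqslant\lambda_\delta$ on $\mathcal O$.

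Everything then rests on two $L^p$-estimates extracted from $b\in\mathbf{F}^{1/2}_\delta$. The crucial one is
\[
\bigl\||b|^{1/p}(\mu-\Delta)^{-1/2}|b|^{1/p'}\bigr\|_{p\to p}\leqslant c_p\delta,\qquad \mu\geqslant\lambda_\delta,\ \ c_p=\tfrac{pp'}{4}.
\]
Unravelling the definition, $\mathbf{F}^{1/2}_\delta$ says precisely that $|b|$ is form-bounded with bound $\delta$ relative to $A:=(\mu-\Delta)^{1/2}$, i.e. $\langle|b|v,v\rangle\leqslant\delta\langle Av,v\rangle$: indeed $\||b|^{1/2}(\lambda_\delta-\Delta)^{-1/4}\|_{2\to2}\leqslant\sqrt\delta$ rephrases as $\langle|b|v,v\rangle\leqslant\delta\langle(\lambda_\delta-\Delta)^{1/2}v,v\rangle$, and $(\lambda_\delta-\Delta)^{1/2}\leqslant(\mu-\Delta)^{1/2}=A$ by operator monotonicity of $\sqrt{\cdot}$ and $\mu\geqslant\lambda_\delta$. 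Since $e^{-tA}$ is sub-Markovian (subordination of $e^{-t(\mu-\Delta)}$), its form is a Dirichlet form, so the Stroock--Varopoulos inequality $\langle Au,u^{p-1}\rangle\geqslant\frac{4(p-1)}{p^2}\langle Au^{p/2},u^{p/2}\rangle$ holds for $u\geqslant0$. Taking $f\in\mathcal E$, $f\geqslant0$ (so all integrals below are finite), and $u:=A^{-1}(|b|^{1/p'}f)\geqslant0$, I would pair $Au=|b|^{1/p'}f$ with $u^{p-1}$ and combine the Stroock--Varopoulos inequality, the form-bound $\int|b|u^p\leqslant\delta\langle Au^{p/2},u^{p/2}\rangle$, and Hölder $\int|b|^{1/p'}fu^{p-1}\leqslant\|f\|_p\bigl(\int|b|u^p\bigr)^{1/p'}$ (via $(p-1)p'=p$): this gives $\frac{4(p-1)}{p^2\delta}\int|b|u^p\leqslant\|f\|_p\bigl(\int|b|u^p\bigr)^{1/p'}$, hence $\||b|^{1/p}u\|_p\leqslant\frac{p^2}{4(p-1)}\delta\|f\|_p=c_p\delta\|f\|_p$; the general $f$ follows from positivity of the kernel of $|b|^{1/p}A^{-1}|b|^{1/p'}$, and the estimate extends from $\mathcal E$ to $L^p$ by continuity. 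The softer estimate is $\||b|^{1/p}(\mu-\Delta)^{-1/(2p)}\|_{p\to p}\leqslant C\delta^{1/p}$ and, dually, $\|(\mu-\Delta)^{-1/(2p')}|b|^{1/p'}\|_{p\to p}\leqslant C\delta^{1/p'}$ (on $\mathcal E$), uniformly in $\mu\geqslant\lambda_\delta$, which I would get by Stein interpolation of the analytic family $z\mapsto|b|^{z/2}(\lambda_\delta-\Delta)^{-z/4}$ between the $L^2$-bound at $\Real z=1$ and the trivial $L^\infty$-bound at $\Real z=0$ (imaginary powers of $\lambda_\delta-\Delta$ grow at most polynomially), followed by the comparison of $(\lambda_\delta-\Delta)$ with $(\mu-\Delta)$; here the constant need not be sharp.

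Granting these, (\textit{i}) and (\textit{ii}) become bookkeeping of exponents. For $T_p$ the pointwise bound gives $\|T_pf\|_p\leqslant m_d\bigl\||b|^{1/p}(\mu-\Delta)^{-1/2}|b|^{1/p'}|f|\bigr\|_p\leqslant m_dc_p\delta\|f\|_p$, with no residual $\zeta$-power since $\nabla(\zeta-\Delta)^{-1}$ has exactly order $-1$. For $G_p=b^{1/p}\cdot\nabla(\zeta-\Delta)^{-1}$ I would split $\nabla(\zeta-\Delta)^{-1}=\nabla(\zeta-\Delta)^{-1/2-1/(2p)}(\zeta-\Delta)^{-1/(2p')}$, bounding the first factor (after the pointwise estimate, with $\alpha=1/(2p)$) by the soft estimate and the second by the functional calculus, getting $\|G_p\|_{p\to p}\leqslant C_1|\zeta|^{-1/(2p')}$; $Q_p=(\zeta-\Delta)^{-1}|b|^{1/p'}$ and $P_p=|b|^{1/p}(\zeta-\Delta)^{-1}$ are symmetric, yielding $|\zeta|^{-1/2-1/(2p)}$ and $|\zeta|^{-1/2-1/(2p')}$ respectively, and $G_p(r),Q_p(q)$ ($r<p<q$) are the same computation with the surplus fractional powers retained, so $K_{1,r},K_{2,q}$ are finite (possibly depending on $r,q$) and uniform in $\zeta\in\mathcal O$. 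Finally, for (\textit{iii}): since $|b_n|\leqslant|b|$, domination of nonnegative integral kernels gives $b_n\in\mathbf{F}^{1/2}_\delta$ with the \emph{same} $\lambda_\delta$, so all bounds of (\textit{i})--(\textit{ii}) hold for $b_n$ uniformly in $n$; in $\Theta_p(\zeta,b_n)=(\zeta-\Delta)^{-1}-Q_p(1+T_p)^{-1}G_p$ I would use $\|(\zeta-\Delta)^{-1}\|_{p\to p}\leqslant M_p|\zeta|^{-1}$ ($-\Delta$ generating a bounded analytic semigroup on $L^p$), $\|(1+T_p)^{-1}\|_{p\to p}\leqslant(1-m_dc_p\delta)^{-1}$ via the Neumann series (legitimate exactly because $m_dc_p\delta<1$ on $\mathcal I$), and $\|Q_p\|_{p\to p}\|G_p\|_{p\to p}\leqslant C_1C_2|\zeta|^{-1/2-1/(2p)}|\zeta|^{-1/(2p')}=C_1C_2|\zeta|^{-1}$, to conclude $\|\Theta_p(\zeta,b_n)\|_{p\to p}\leqslant C_p|\zeta|^{-1}$.

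The main obstacle is the sharp constant $c_p=pp'/4$ in the $T_p$-bound: this exact value is what makes $m_dc_p\delta<1$ equivalent to $p\in\mathcal I$ (solving $\frac{p^2}{4(p-1)}=\frac1{m_d\delta}$ returns precisely the endpoints $\frac{2}{1\pm\sqrt{1-m_d\delta}}$), so an interpolation argument that loses track of the constant is useless there, and that is why the Stroock--Varopoulos inequality, rather than plain complex interpolation, is needed for that one estimate. The only other delicate point is the routine but unavoidable use of $\mathcal E=\bigcup_{\epsilon>0}e^{-\epsilon|b|}L^p$ to make sense of the weight-first operators $Q_p(q),Q_p,T_p$ before extending by continuity.
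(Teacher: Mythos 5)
Your overall skeleton is the same as the paper's: reduce everything by the pointwise kernel dominations of the Appendix, and then invoke two $L^p$-estimates coming from $b\in\mathbf F^{\scriptscriptstyle\frac12}_\delta$ — a sharp one for $|b|^{1/p}(\mu-\Delta)^{-1/2}|b|^{1/p'}$ (which is the paper's step (\textbf{b})), and a softer one for $|b|^{1/r}(\mu-\Delta)^{-1/2}$ (the paper's step (\textbf{a}), from which (\textbf{c}) follows by duality). Your derivation of the sharp constant $c_p\delta$ is exactly the paper's argument: the $[LS]$ inequality $c_r^{-1}\|A^{1/2}u^{r/2}\|_2^2\leqslant\langle A_ru,u^{r-1}\rangle$ (i.e.\ Stroock--Varopoulos for the subordinated sub-Markovian semigroup $e^{-tA}$, $A=(\mu-\Delta)^{1/2}$), combined with the form bound $\int|b|u^p\leqslant\delta\langle Au^{p/2},u^{p/2}\rangle$, with $u:=A^{-1}(|b|^{1/p'}|f|)$ and $\langle Au,u^{p-1}\rangle\leqslant\|f\|_p\bigl(\int|b|u^p\bigr)^{1/p'}$. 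That part, and the reduction from complex $\zeta\in\mathcal O$ to real $\mu$ through \eqref{lem_lambda_est}--\eqref{zeta_est_0}, is correct. The bookkeeping of exponents in (\textit{ii})--(\textit{iii}), and the observation that $|b_n|\leqslant|b|$ together with positivity of the relevant kernels makes all constants uniform in $n$, are also as in the paper.

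However, your proposed proof of the soft estimate is flawed. You want to get $\||b|^{1/p}(\mu-\Delta)^{-1/(2p)}\|_{p\to p}\leqslant C\delta^{1/p}$ by Stein interpolation of $T_z=|b|^{z/2}(\lambda_\delta-\Delta)^{-z/4}$ between $L^2$ at $\Real z=1$ and $L^\infty$ at $\Real z=0$, justifying the $\Real z=0$ line by ``imaginary powers of $\lambda_\delta-\Delta$ grow at most polynomially.'' But that polynomial growth holds only on $L^q$ for $1<q<\infty$ (Mikhlin); the operators $(\lambda_\delta-\Delta)^{-is/4}$, $s\neq 0$, are \emph{not} bounded on $L^\infty$ (nor on $L^1$), so there is no admissible bound along $\Real z=0$ and the interpolation collapses. (Interpolating to a finite $L^{p_0}$ instead would force $1/p_\theta=\theta/2+(1-\theta)/p_0$, which returns the target exponent $p=2/\theta$ only in the excluded limit $p_0=\infty$.) The paper avoids this entirely: its step (\textbf{a}) is obtained by the \emph{same} $[LS]$-inequality argument you already use for (\textbf{b}), simply taking $u:=A^{-1}|f|$ instead of $u:=A^{-1}(|b|^{1/p'}|f|)$, and using $A\geqslant\sqrt\mu$ to get $\|u\|_r\leqslant\mu^{-1/2}\|f\|_r$; this yields $\||b|^{1/r}(\mu-\Delta)^{-1/2}\|_{r\to r}\leqslant(c_r\delta)^{1/r}\mu^{-1/(2r')}$ with the explicit $\mu$-decay built in. Replacing your Stein step by this argument closes the gap. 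A secondary, harmless difference: the paper controls $Q_p(q)$ and $G_p(r)$ via the subordination formula \eqref{repr_81} for $(\zeta-\Delta)^{-1/(2q')}$ (the resulting integral converges precisely because $q>p$, resp.\ $r<p$), whereas you split off the surplus fractional power as a separate bounded smoothing factor; both routes work.
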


\begin{remark}
The proof of Proposition \ref{est_prop} uses ideas in \cite{BS}, \cite{LS}, and appeals to the $L^p$-inequalities between operator $(\lambda-\Delta)^{\frac{1}{2}}$ and ``potential'' $|b|$.
\end{remark}

\begin{proof}
\noindent(\textit{i})~ 
Let $r \in (1,\infty)$. Then

\smallskip

(\textbf{a})~$\mu \geqslant \lambda \;\Rightarrow\; \| |b|^\frac{1}{r}(\mu-\Delta)^{-\frac{1}{2}} \|_{r\to r} \leqslant C_{r,\delta} \mu^{-\frac{1}{2 r^\prime}}$, $C_{r,\delta} = (c_r\delta)^\frac{1}{r}$, $c_r=rr'/4$.

\smallskip

Indeed, 
define in $L^2$  $A = (\mu-\Delta)^{\frac{1}{2}}$, $D(A)= W^{1,2}.$ Then $-A$ is a symmetric Markov generator. Therefore (see e.g.~\cite{LS}), for any $r \in (1,\infty)$,
$$
0 \leqslant u \in D(A_r) \Rightarrow v := u^\frac{r}{2} \in D(A^\frac{1}{2}) \text{ and } c_r^{-1} \|A^\frac{1}{2} v \|_2^2 \leqslant \langle A_r u, u^{r-1} \rangle.
$$
Now let $u$ be the solution of $A_r u = |f|, \;f \in L^r.$ Note that $A \geqslant \sqrt{\mu}$ and $\|u\|_r \leqslant \mu^{-\frac{1}{2}}\|f\|_r.$

Since $b \in \mathbf{F}_\delta^{{\scriptscriptstyle \frac{1}{2}}}$, we have
\[
(c_r \delta)^{-1} \| |b|^\frac{1}{2} v \|_2^2 \leqslant \langle A_r u, u^{r-1} \rangle,
\]
and so $\| |b|^\frac{1}{r} u\|_r^r \leqslant c_r\delta\|f\|_r \|u\|_r^{r-1},$ $\| |b|^\frac{1}{r} A^{-1} |f| \|_r^r \leqslant c_r\delta \mu^{-\frac{r-1}{2}} \|f\|_r^r.$ (\textbf{a}) is proved. 

\medskip

(\textbf{b})~$\mu \geqslant \lambda \;\Rightarrow \;\||b|^\frac{1}{r}(\mu-\Delta)^{-\frac{1}{2}}|b|^\frac{1}{r^\prime} f \|_r \leqslant c_r\delta \|f\|_r$, $f \in \mathcal{E}.$

\smallskip
Indeed, let $u$ be the solution of $A u = |b|^\frac{1}{r^\prime}|f|$, $f \in \mathcal{E}.$ Then, obviously,
\[
\| |b|^\frac{1}{r} u\|_r^r \leqslant c_r\delta \|f\|_r \| |b|^\frac{1}{r} u\|_r^{r-1},
\]
or $\| |b|^\frac{1}{r} u\|_r \leqslant c_r\delta \|f\|_r.$ (\textbf{b}) is proved. 

\medskip

(\textbf{c})~$\mu \geqslant \lambda \; \Rightarrow \; \|(\mu -\Delta)^{-\frac{1}{2}} |b|^\frac{1}{r^\prime} f \|_r \leqslant C_{r', \delta}\,\mu^{-\frac{1}{2r}} \|f\|_r,$ $f \in \mathcal{E}$. 

\smallskip
Indeed, (\textbf{c}) follows from (\textbf{a}) by duality. 

\medskip

Let us prove \eqref{T_est}. 
Let $\zeta \in \mathcal O$. Using \eqref{lem_lambda_est} +   (\textbf{b}) with $r=p \in \mathcal I$, $\mu=\lambda$, we obtain:
$$\| T_p f\|_p \leqslant m_d\|b^{\frac{1}{p}}(\kappa_d^{-1}\Real \zeta-\Delta)^{-\frac{1}{2}}|b|^{\frac{1}{p'}}|f|\|_p \leqslant m_d c_p\delta  \|f\|_p, \quad f \in \mathcal{E}.$$
$m_d c_p\delta <1$ since $p \in \mathcal I$.

Next, we estimate $\|Q_{p}(q)\|_{p \rightarrow p}$, $\|G_{p}(r)\|_{p \rightarrow p}$.
Let $\Real\,\zeta \geqslant \lambda$, $p<q$. 
We obtain:
\begin{align*}
\|Q_{p}(q)f\|_p & \leqslant ~\|(\Real\,\zeta-\Delta)^{-\frac{1}{2q'}}|b|^{\frac{1}{p'}}|f|\|_p \\
\leqslant &~\|(\lambda-\Delta)^{-\frac{1}{2q'}}|b|^{\frac{1}{p'}}|f|\|_p \\
& \text{(here we are using \eqref{repr_81})}\\
& \leqslant 
k_q\int_0^\infty t^{-1+\frac{1}{2q}}\|(t+\lambda-\Delta)^{-\frac{1}{2}}|b|^{\frac{1}{p'}}|f|\|_pdt \\
& \text{(here we are using (\textbf{c}) with $r=p \in \mathcal I$, $\mu=t+\lambda$)} \\
& \leqslant ~
k_qC_{p',\delta} \int_0^\infty t^{-1+\frac{1}{2q}}(t+\lambda)^{-\frac{1}{2p}}dt \;\|f\|_p =K_{2,q}\|f\|_p,\quad f \in \mathcal E.
\end{align*}

Let $\zeta \in \mathcal O$, $1 \leqslant r < p$. Using \eqref{lem_lambda_est_4},
we obtain:
\begin{align*}
\|G_{p}(r)f\|_p &\leqslant m_{r,d}\||b|^{\frac{1}{p}}(\kappa_d^{-1}\Real\,\zeta-\Delta)^{-\frac{1}{2r}} |f|\|_p \\&\leqslant m_{r,d}\||b|^{\frac{1}{p}}(\lambda-\Delta)^{-\frac{1}{2r}} |f|\|_p \qquad\\
& (\text{here we are using \eqref{repr_81} with $q':=r$}) \\ &\leqslant 
m_{r,d}k_{r'}\int_0^\infty t^{-1+\frac{1}{2r'}}\||b|^{\frac{1}{p}}(t+\lambda-\Delta)^{-\frac{1}{2}}|f|\|_pdt \;\;\\
&(\text{here we are using (\textbf{a}) with $r=p \in \mathcal I$, $\mu=t+\lambda$})\\ 
&\leqslant 
m_{r,d}k_{r'}C_{p,\delta} \int_0^\infty t^{-1+\frac{1}{2r'}}(t+\lambda)^{-\frac{1}{2p'}}dt\;\|f\|_p = K_{1,r}\|f\|_p,  \quad f \in \mathcal E.
\end{align*}
The proof of (\textit{i}) is completed.

\medskip

(\textit{ii})~ 
Let $\Real\,\zeta \geqslant \lambda$.
Using \eqref{zeta_est_0} + (\textbf{c}) with $r=p \in \mathcal I$, $\mu=|\zeta|$, we obtain:
\begin{align*}
\|Q_p(2\zeta,b) f\|_{p} &\leqslant  
C_{p^\prime, \delta} 2^{-\frac{d}{4}+\frac{1}{4}}
|\zeta|^{-\frac{1}{2p}}\|f\|_p, \quad f \in \mathcal{E}.
\end{align*}
Now, using the identity
$(\zeta-\Delta)^{-1} = \bigl(1 + \zeta(\zeta-\Delta)^{-1}\bigr) (2\zeta-\Delta)^{-1}$, we obtain:
\begin{align*}
\|Q_p(\zeta,b) f\|_{p} &\leqslant \|1 + \zeta(\zeta-\Delta)^{-1}\|_{p \rightarrow p} \|Q_p(2\zeta,b)f\|_p \\ &\leqslant 2 C_{p^\prime, \delta} 2^{-\frac{d}{4}+\frac{1}{4}}|\zeta|^{-\frac{1}{2p}}
\|f|_p \\
&=C_2|\zeta|^{-\frac{1}{2p}}\|f\|_p, \quad f \in \mathcal E.
\end{align*}

Let $\Real\,\zeta \geqslant \lambda$.
Using \eqref{zeta_est_0}, we obtain:
\begin{align*}
\|P_p(2\zeta,b)\|_{p \rightarrow p} &\leqslant  \||b|^{\frac{1}{p}}(2\zeta-\Delta)^{-\frac{1}{2}}\|_{p \rightarrow p} \|(2\zeta-\Delta)^{-\frac{1}{2}}\|_{p \rightarrow p} \\
&\leqslant 2^{\frac{d}{4}+\frac{1}{4}}\||b|^{\frac{1}{p}}(|\zeta|-\Delta)^{-\frac{1}{2}}\|_{p \rightarrow p} (2|\zeta|)^{-\frac{1}{2}} \qquad \\
&(\text{here we are using (\textbf{a}) with $r=p \in \mathcal I$, $\mu=|\zeta|$}) \\
&\leqslant C_{p,\delta} 2^{\frac{d}{4}+\frac{1}{4}} 2^{-\frac{1}{2}}|\zeta|^{-\frac{1}{2}-\frac{1}{2p'}}.
\end{align*}
Now, using the identity
$(\zeta-\Delta)^{-1} = (2\zeta-\Delta)^{-1}\bigl(1 - \zeta(\zeta-\Delta)^{-1}\bigr)$, we obtain:
\begin{align*}
\|P_p(\zeta,b)\|_{p \rightarrow p} &\leqslant 2 C_{p,\delta} 2^{\frac{d}{4}+\frac{1}{4}} 2^{-\frac{1}{2}}|\zeta|^{-\frac{1}{2}-\frac{1}{2p'}} \\
&=C_3|\zeta|^{-\frac{1}{2}-\frac{1}{2p'}}.
\end{align*}

Let $\zeta \in \mathcal O.$ Using \eqref{zeta_est} + (\textbf{a}) with $r=p \in \mathcal I$, $\mu=|\zeta|$, we obtain:
\begin{align*}
\|G_p(2\kappa_d\zeta,b)  \|_{p \rightarrow p} &\leqslant  m_dC_{p, \delta} 2^{\frac{d}{4}}
|\zeta|^{-\frac{1}{2p'}}. 
\end{align*}
Now, using the identity
$(\zeta-\Delta)^{-1} = (2\kappa_d\zeta-\Delta)^{-1}\bigl(1 - (2\kappa_d-1)\zeta(\zeta-\Delta)^{-1}\bigr)$, we obtain:
\begin{align*}
\|G_p(\zeta,b)  \|_{p \rightarrow p} &\leqslant  2\kappa_d m_dC_{p, \delta} 2^{\frac{d}{4}}
|\zeta|^{-\frac{1}{2p'}} \\
&=C_1|\zeta|^{-\frac{1}{2p'}}. 
\end{align*}
The proof of (\textit{ii}) is completed.

\medskip

(\textit{iii})~ 
By the definition of $\Theta_p(\zeta,b)$, see \eqref{R_p}, for every $\zeta \in \mathcal O$,
\begin{align*}
\|\Theta_p(\zeta,b)\|_{p \rightarrow p}  &\leqslant \|(\zeta-\Delta)^{-1}\|_{p \rightarrow p} + \|Q_p\|_{p \rightarrow p}\|(1+T_p)^{-1}\|_{p \rightarrow p}\|G_p\|_{p \rightarrow p} \\
&\text{(here we are using \eqref{T_est}, \eqref{G_Q_est})}\\
&\leqslant 
|\zeta|^{-1}+C_2 |\zeta|^{-\frac{1}{2}-\frac{1}{2p'}} (1- m_d c_p\delta )^{-1} C_1 |\zeta|^{-\frac{1}{2p}}
  \\
& \leqslant C_p|\zeta|^{-1}, \quad C_p:=1+C_1C_2(1- m_d c_p\delta )^{-1}.
\end{align*}
\end{proof}

\medskip

\begin{remark}
Since $|b_n| \leqslant |b|$ a.e., Proposition \ref{est_prop} is valid for $b_n$, $n=1,2,\dots$, with the same constants.
\end{remark}

\medskip

\begin{proposition}
\label{lem_15}
For every $p \in \mathcal I$, and $n=1,2,\dots$, the operator-valued function $\Theta_{p}(\zeta,b_n)$ is a pseudo-resolvent on $\mathcal O$, i.e.
 \begin{equation*}
\Theta_p(\zeta,b_n) - \Theta_p(\eta,b_n) = (\eta - \zeta) \Theta_p(\zeta,b_n)\Theta_p(\eta,b_n), \quad \zeta,\eta \in \mathcal O.
\end{equation*}
\end{proposition}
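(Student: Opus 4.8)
The plan is to verify the resolvent (Hilbert) identity for $\Theta_p(\zeta, b_n)$ by direct algebraic manipulation, exploiting the fact that $b_n$ is a \emph{bounded} vector field, so that all the operators $(\zeta-\Delta)^{-1}$, $G_p$, $Q_p$, $T_p$ built from $b_n$ act on all of $L^p$ without any closure subtleties, and the standard resolvent identity $(\zeta-\Delta)^{-1}-(\eta-\Delta)^{-1} = (\eta-\zeta)(\zeta-\Delta)^{-1}(\eta-\Delta)^{-1}$ holds. Write $R_\zeta := (\zeta-\Delta)^{-1}$ and abbreviate $G_p = G_p(\zeta)$, $G_p' = G_p(\eta)$, etc. The difference $\Theta_p(\zeta,b_n)-\Theta_p(\eta,b_n)$ splits into the free part $R_\zeta - R_\eta$ (which is $(\eta-\zeta)R_\zeta R_\eta$ by the classical identity) and the correction part $-\,Q_p(1+T_p)^{-1}G_p + Q_p'(1+T_p')^{-1}G_p'$.

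First I would record the elementary operator identities relating the $\zeta$- and $\eta$-versions of the building blocks. Each of $G_p, Q_p, T_p$ is of the form (something)$\cdot R_\zeta\cdot$(something) or built from $R_\zeta^{1/2}$-type factors, but for the pseudo-resolvent computation it is cleanest to use that $G_p = b_n^{1/p}\cdot\nabla R_\zeta$, $Q_p = R_\zeta |b_n|^{1/p'}$ (genuinely on $L^p$ now, since $b_n$ bounded), $T_p = b_n^{1/p}\cdot\nabla R_\zeta |b_n|^{1/p'}$; then from the resolvent identity for $R$ one gets, e.g.,
\[
G_p(\zeta) - G_p(\eta) = (\eta-\zeta)\, b_n^{1/p}\cdot\nabla R_\zeta R_\eta = (\eta-\zeta)\, G_p(\zeta) R_\eta,
\]
\[
Q_p(\zeta) - Q_p(\eta) = (\eta-\zeta)\, R_\zeta R_\eta |b_n|^{1/p'} = (\eta-\zeta)\, R_\zeta\, Q_p(\eta),
\]
and similarly $T_p(\zeta) - T_p(\eta) = (\eta-\zeta)\, G_p(\zeta) Q_p(\eta) = (\eta - \zeta)\, G_p(\zeta) R_\eta |b_n|^{1/p'}$. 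These are the only structural inputs; note they all follow from the single scalar identity applied to $R$, plus boundedness.

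Next I would substitute these into $\Theta_p(\zeta)-\Theta_p(\eta)$ and collect terms. The standard trick is to introduce $M_\zeta := (1+T_p(\zeta))^{-1}$, use $M_\zeta - M_\eta = M_\zeta(T_p(\eta)-T_p(\zeta))M_\eta = (\eta-\zeta)M_\zeta G_p(\zeta)Q_p(\eta)M_\eta$, and then check that everything reassembles into $(\eta-\zeta)\,\Theta_p(\zeta)\,\Theta_p(\eta)$. Concretely one expands the product
\[
\Theta_p(\zeta)\Theta_p(\eta) = R_\zeta R_\eta - R_\zeta Q_p' M_\eta G_p' - Q_p M_\zeta G_p R_\eta + Q_p M_\zeta G_p Q_p' M_\eta G_p',
\]
and separately expands the difference using the three displayed identities and the identity for $M_\zeta - M_\eta$; the key cancellations come from recognizing $G_p R_\eta = (\eta-\zeta)^{-1}(G_p(\zeta)-G_p(\eta))\cdot(\text{stuff})$ wait—more to the point, $G_p(\zeta)R_\eta$ is exactly the operator appearing in $G_p(\zeta)-G_p(\eta)$ up to the factor $\eta-\zeta$, and likewise $R_\zeta Q_p(\eta)$, so the cross terms match. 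The main obstacle — though it is bookkeeping rather than conceptual — is organizing the four-fold product term $Q_p M_\zeta G_p Q_p' M_\eta G_p'$ and the term arising from $M_\zeta - M_\eta$ so that the middle factor $G_p(\zeta)Q_p(\eta)$ is correctly identified with $(\eta-\zeta)^{-1}(T_p(\zeta)-T_p(\eta))$, after which the telescoping $M_\zeta(1+T_p(\zeta)) = 1 = M_\eta(1+T_p(\eta))$ collapses everything. Since this is a purely algebraic identity among bounded operators, no analytic estimates are needed here; the estimates of Proposition~\ref{est_prop} are only invoked to guarantee $(1+T_p)^{-1}$ exists (via $\|T_p\|_{p\to p}\le m_d c_p\delta<1$) so that $M_\zeta, M_\eta$ are well-defined bounded operators. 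I would end by remarking that the same computation applies verbatim to $b$ in place of $b_n$ once the operators $Q_p, T_p$ for $b$ have been shown bounded, which is Proposition~\ref{est_prop}; but for the present Proposition only the bounded $b_n$ case is asserted.
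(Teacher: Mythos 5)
Your proposal is correct and works, but it is a genuinely different route from the paper's. The paper proves the identity by first re-expanding
$\Theta_p(\zeta,b_n)=\sum_{k\geqslant 0}S_\zeta^k$, where $S_\zeta^k:=(-1)^k(\zeta-\Delta)^{-1}b_n\cdot\nabla(\zeta-\Delta)^{-1}\cdots b_n\cdot\nabla(\zeta-\Delta)^{-1}$ contains $k$ factors of $b_n$ (the Neumann series for $(1+T_p)^{-1}$ made explicit), forming the Cauchy product $\Theta_p(\zeta)\Theta_p(\eta)=\sum_\ell\sum_{i=0}^\ell S_\zeta^i S_\eta^{\ell-i}$, inserting the scalar resolvent identity at the one interface where $(\zeta-\Delta)^{-1}$ meets $(\eta-\Delta)^{-1}$, and observing that the inner sum over $i$ telescopes to $(\eta-\zeta)^{-1}(-1)^\ell\bigl(I^\ell_{\ell+1,0}-I^\ell_{0,\ell+1}\bigr)$. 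Your proof instead keeps the closed form $\Theta_p=R_\zeta-Q_pM_\zeta G_p$ throughout: you derive the three resolvent-type identities for $G_p$, $Q_p$, $T_p$, combine them with $M_\zeta-M_\eta=M_\zeta\bigl(T_p(\eta)-T_p(\zeta)\bigr)M_\eta$, and expand the product of two $\Theta_p$'s into four terms, matching them against the difference. Your route is more structural and avoids the bookkeeping with the multi-indices $I^k_{j,m}$; the paper's route makes the role of the single scalar resolvent identity at each ``interface'' completely transparent and requires nothing beyond absolute convergence of the Neumann series. Both rely on the same two facts: that $b_n$ is bounded so no density/closure issues arise, and that $\|T_p\|_{p\to p}<1$ so $M_\zeta$ exists. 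One small slip in your sketch: since $T_p(\zeta)-T_p(\eta)=(\eta-\zeta)G_p(\zeta)Q_p(\eta)$, the correct identity is $M_\zeta-M_\eta=-(\eta-\zeta)M_\zeta G_p(\zeta)Q_p(\eta)M_\eta$, not $+(\eta-\zeta)$; the sign is exactly what is needed in the final cancellation, as one sees by isolating $Q\bigl[M-M'+M(T(\zeta)-T(\eta))M'\bigr]G'$, which vanishes by the (correctly signed) identity.
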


\begin{proof}
Define
$
S_\zeta^k:=(-1)^k (\zeta-\Delta)^{-1} b_n  \cdot \nabla (\zeta-\Delta)^{-1} \dots b_n\cdot\nabla (\zeta-\Delta)^{-1}
$, $k:=\#~\text{}b_n$'s. Obviously,
\begin{align*}
\Theta_p(\zeta,b_n)
&:= (\zeta-\Delta)^{-1} - Q\, \bigl(1+T \bigr)^{-1} G 
\\ &= (\zeta-\Delta)^{-1} - Q \,\sum_{k=0}^\infty (-1)^{k}T^{k} \, G = \sum_{k=0}^\infty S_\zeta^k \quad (\text{absolutely convergent in $L^p$}),
\end{align*}
\begin{equation}
\label{RR}
\Theta_p(\zeta,b_n)\Theta_p(\eta,b_n) = \sum_{\ell=0}^\infty \sum_{i=0}^\ell S_\zeta^i S_\eta^{\ell-i}, \quad \zeta,\eta \in \mathcal O.
\end{equation}
Define
\begin{align*}
I^k_{j,m}(\zeta,\eta):=&~(\zeta-\Delta)^{-1} b_n  \cdot\nabla(\zeta-\Delta)^{-1} \dots b_n   \cdot\nabla(\zeta-\Delta)^{-1} 
\\ 
& b_n  \cdot\nabla (\eta-\Delta)^{-1} b_n \cdot\nabla (\eta-\Delta)^{-1}  \dots b_n  \cdot\nabla(\eta-\Delta)^{-1}, \\ & \qquad \quad j:=\#\zeta{\text'}s, \quad m:=\# \eta{\text'}s, \quad k:=\#b_n{\text'}s.
\end{align*}

Substituting the identity $(\zeta-\Delta)^{-1} (\eta-\Delta)^{-1} = (\eta-\zeta)^{-1}\bigl((\zeta-\Delta)^{-1}-(\eta-\Delta)^{-1} \bigr)$ inside the product
\begin{multline*}
S_\zeta^k S_\eta^{j}  \\ = (-1)^{k+j} (\zeta-\Delta)^{-1} b_n  \cdot\nabla (\zeta-\Delta)^{-1} \dots~ b_n\cdot\nabla  \underbrace{ (\zeta-\Delta)^{-1} (\eta-\Delta)^{-1}}_{\scriptscriptstyle (\eta-\zeta)^{-1}((\zeta-\Delta)^{-1}-(\eta-\Delta)^{-1})}  b_n  \cdot\nabla (\eta-\Delta)^{-1} \dots~ b_n \cdot\nabla (\eta-\Delta)^{-1},
\end{multline*}
we obtain
$S_\zeta^k S_\eta^{j} = (\eta-\zeta)^{-1} (-1)^{k+j} \bigl[ I^{k+j}_{k+1,j} - I^{k+j}_{k,j+1} \bigr].$
Therefore,
\begin{multline*}
\sum_{i=0}^\ell S_\zeta^i S_\eta^{\ell-i} =  (\eta-\zeta)^{-1}(-1)^{\ell}\biggl[I^\ell_{1,\ell} - I^\ell_{0,\ell+1} + I^\ell_{2,\ell-1} - I^\ell_{1,\ell} + \dots + I^\ell_{\ell+1,0} - I^{\ell}_{\ell,1} \biggr] \\ =  (\eta-\zeta)^{-1}(-1)^{\ell} \bigl(I_{\ell+1,0}^\ell  - I_{0,\ell+1}^\ell \bigr).
\end{multline*}
Substituting the last identity in the right-hand side of \eqref{RR},
 we obtain
$$
\Theta_p(\zeta,b_n)\Theta(\eta,b_n) = (\eta-\zeta)^{-1}\sum_{\ell=0}^\infty (-1)^{\ell} \bigl(I_{\ell+1,0}^\ell  - I_{0,\ell+1}^\ell \bigr) = (\eta-\zeta)^{-1} \bigl(\Theta_p(\zeta,b_n) h-\Theta_{p}(\eta,b_n)\bigr).
$$
\end{proof}


\begin{proposition}
\label{lem_18}
For every $p \in \mathcal I$, and $n=1,2,\dots$,

\smallskip
{\rm (i)} $\mu \Theta_{p}(\mu,b_n) \overset{s}{\rightarrow} 1 \text{ in $L^p$ as } \mu\uparrow \infty,$

\smallskip

{\rm(ii)} $\|\Theta_{p}(\zeta,b_n)\|_{p \rightarrow p} \leqslant C_p|\zeta|^{-1}$, $\zeta \in \mathcal O$, 
for a constant $C_p$ independent of $n$.

\end{proposition}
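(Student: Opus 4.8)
The plan is to handle the two assertions separately. Assertion (ii) I expect to be essentially free: since $|b_n|\leqslant|b|$ a.e., the Remark following Proposition \ref{est_prop} gives that Proposition \ref{est_prop}(iii) applies to $b_n$ with the \emph{same} constant $C_p$, i.e.\ $\|\Theta_p(\zeta,b_n)\|_{p\to p}\leqslant C_p|\zeta|^{-1}$, $\zeta\in\mathcal O$, uniformly in $n$. So the real content is (i), and I would prove it directly from the representation \eqref{R_p}: for real $\mu\geqslant\kappa_d\lambda_\delta$,
\[
\mu\,\Theta_p(\mu,b_n)=\mu(\mu-\Delta)^{-1}-\mu\,Q_p(\mu,b_n)\bigl(1+T_p(\mu,b_n)\bigr)^{-1}G_p(\mu,b_n).
\]

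First I would dispose of the first summand: $\mu(\mu-\Delta)^{-1}\overset{s}{\rightarrow}1$ in $L^p$ as $\mu\uparrow\infty$ by the standard approximate-identity argument ($\|\mu(\mu-\Delta)^{-1}\|_{p\to p}\leqslant1$, and $\mu(\mu-\Delta)^{-1}f-f=(\mu-\Delta)^{-1}\Delta f\to0$ for $f\in C_c^\infty(\mathbb R^d)$, then density). The remaining task is to show the second summand tends to $0$ in $L^p$. Here I would spend the boundedness of the truncation — $|b_n^{\frac1p}|=|b_n|^{\frac1p}\leqslant n^{\frac1p}$ — to upgrade the estimate on $G_p$: using \eqref{lem_lambda_est} and $\|(\nu-\Delta)^{-\frac12}\|_{p\to p}\leqslant\nu^{-\frac12}$,
\[
\|G_p(\mu,b_n)\|_{p\to p}\leqslant n^{\frac1p}\|\nabla(\mu-\Delta)^{-1}\|_{p\to p}\leqslant n^{\frac1p}\,m_d\,\kappa_d^{1/2}\,\mu^{-\frac12}.
\]
Feeding this together with $\|Q_p(\mu,b_n)\|_{p\to p}\leqslant C_2\mu^{-\frac12-\frac1{2p}}$ from \eqref{G_Q_est} and $\|(1+T_p)^{-1}\|_{p\to p}\leqslant(1-m_dc_p\delta)^{-1}$ from \eqref{T_est} (both valid for $b_n$ with the same constants, again by the Remark after Proposition \ref{est_prop}), I obtain
\[
\bigl\|\mu\,Q_p(\mu,b_n)(1+T_p(\mu,b_n))^{-1}G_p(\mu,b_n)\bigr\|_{p\to p}\leqslant C_n\,\mu^{\,1-\frac12-\frac1{2p}-\frac12}=C_n\,\mu^{-\frac1{2p}}\longrightarrow0
\]
as $\mu\uparrow\infty$. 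Adding the two summands gives $\mu\,\Theta_p(\mu,b_n)\overset{s}{\rightarrow}1$ in $L^p$, which is (i).

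The hard part — and really the only delicate point — is the scaling bookkeeping: the ``universal'' bounds of Proposition \ref{est_prop} give only $\|Q_p\|_{p\to p}\|G_p\|_{p\to p}=O(|\mu|^{-1})$, hence $\|\mu\,Q_p(1+T_p)^{-1}G_p\|_{p\to p}=O(1)$ (the exponents sum to $1-\frac12-\frac1{2p}-\frac1{2p'}=0$), which is not enough to conclude. One must trade the boundedness of $b_n$ for the improved exponent $-\frac12$ on $\mu$ in $\|G_p(\mu,b_n)\|_{p\to p}$, which is legitimate since $\frac12>\frac1{2p'}$ and which tips the total $\mu$-exponent strictly negative. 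This yields the limit for each fixed $n$, which is all Proposition \ref{lem_18} claims; if one later needs the convergence uniform in $n$, the norm bound on $G_p(\mu,b_n)$ above should be replaced by the $n$-independent estimate $\|G_p(\mu,b_n)f\|_p\leqslant C_{p,\delta}\,\mu^{-\frac12-\frac1{2p'}}\|\nabla f\|_p$ for $f\in C_c^\infty(\mathbb R^d)$ — obtained by factoring $\nabla(\mu-\Delta)^{-1}f=(\mu-\Delta)^{-\frac12}(\mu-\Delta)^{-\frac12}\nabla f$ and applying estimate (\textbf{a}) from the proof of Proposition \ref{est_prop} — and then combined, via density of $C_c^\infty(\mathbb R^d)$, with the $n$-independent uniform bound $\|\mu\,Q_p(1+T_p)^{-1}G_p\|_{p\to p}\leqslant C_1C_2(1-m_dc_p\delta)^{-1}$.
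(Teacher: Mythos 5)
Your proposal is correct, and it is worth separating the two threads you offer. Assertion (ii) you dispatch exactly as the paper does, via the Remark after Proposition \ref{est_prop}. For (i), your \emph{primary} argument takes a genuinely different (and simpler) route from the paper's: you trade the boundedness $|b_n|\leqslant n$ for the improved exponent $\|G_p(\mu,b_n)\|_{p\to p}\leqslant n^{1/p}m_d\kappa_d^{1/2}\mu^{-1/2}$, getting the second summand to be $O(\mu^{-1/(2p)})$ in \emph{operator norm}, so it vanishes outright and you don't even need a density argument. This is valid for the statement as written — each $n$ fixed — and is cleaner for that purpose. What it does \emph{not} buy is uniformity in $n$, precisely because the constant is $C_n\sim n^{1/p}$.

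The paper, on the other hand, proves (i) with $n$-independent constants from the start: it writes, for $h\in C_c^\infty$,
$$\Theta_p h-(\mu-\Delta)^{-1}h=-Q_p\,(1+T_p)^{-1}\,b_n^{1/p}\cdot(\mu-\Delta)^{-1}\nabla h,$$
(moving the gradient onto the test function), and then estimates the last factor by $\|P_p\|_{p\to p}\|\nabla h\|_p\leqslant C_3\mu^{-1/2-1/(2p')}\|\nabla h\|_p$, where $P_p=|b_n|^{1/p}(\mu-\Delta)^{-1}$ was pre-established in Proposition \ref{est_prop}(ii) with constants independent of $n$ (again via $|b_n|\leqslant |b|$). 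This yields $\|\mu\Theta_p h-\mu(\mu-\Delta)^{-1}h\|_p\leqslant C_0\mu^{-1/2}\|\nabla h\|_p$ uniformly in $n$, which the paper then quotes verbatim to supply condition 2) of the Trotter theorem. Your ``fix'' — factoring $\nabla(\mu-\Delta)^{-1}f=(\mu-\Delta)^{-1/2}(\mu-\Delta)^{-1/2}\nabla f$, applying estimate (\textbf{a}), and closing via density using the $n$-independent $O(1)$ norm bound on $\mu\,Q_p(1+T_p)^{-1}G_p$ — is in substance identical to the paper's proof (the paper just packages the two factors as $P_p$). In short: your main argument is a correct, lighter-weight proof of the proposition \emph{as stated}; your fallback is essentially the paper's proof, and it is the one actually needed downstream because the paper re-uses this computation for the uniform-in-$n$ Trotter condition. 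You correctly diagnose the ``scaling bookkeeping'' as the only delicate point — the universal bounds \eqref{G_Q_est} give exponent sum exactly $0$, so one must extract an extra $\mu^{-1/2}$ either from boundedness of $b_n$ or from moving $\nabla$ off the resolvent.
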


\begin{proof}
Proof of (i). Put $\Theta_p \equiv \Theta_p(\mu,b_n)$, $Q_p \equiv Q_p(\mu,b_n)$, $T_p \equiv T_p(\mu,b_n)$, $P_p \equiv P_p(\mu,b_n).$ Since $ \mu(\mu-\Delta)^{-1} \overset{s}{\rightarrow} 1$,
it suffices to show that $\mu \Theta_{p} - \mu(\mu-\Delta)^{-1} \overset{s}{\rightarrow} 0$ in $L^p$.
Since  $\Theta_p\in \mathcal B(L^p)$, and $C_c^\infty$ is dense in $L^p$, it suffices to show that
$\mu \Theta_{p}h - \mu(\mu-\Delta)^{-1}h \rightarrow 0$ in $L^p$ for every $h \in C_c^\infty.$
Write
\begin{equation*}
\Theta_{p}h - (\mu-\Delta)^{-1}h=
-Q_{p} \bigl(1+T_{p} \bigr)^{-1} \,
 b_n^{\frac{1}{p}} \cdot (\mu-\Delta)^{-1}\nabla h.
\end{equation*}
$$
\text{By \eqref{T_est}, } \quad \|\bigl(1+T_{p} \bigr)^{-1}\|_{p \rightarrow p} \leqslant \frac{1}{1-\|T_{p}\|_{p \rightarrow p}} \leqslant \frac{1}{1- m_d c_p\delta }<\infty, \quad \text{by \eqref{G_Q_est},} \quad \|Q_p\|_{p \rightarrow p} \leqslant C_2\mu^{-\frac{1}{2}-\frac{1}{2p}}.$$ 
Again, by \eqref{G_Q_est},
 \begin{align*}
\|b_n^{\frac{1}{p}} \cdot (\mu-\Delta)^{-1}\nabla h\|_p \leqslant &  
\||b_n|^{\frac{1}{p}}(\mu-\Delta)^{-1}  |\nabla h|\|_{p} \\ \leqslant &\, 
\|
P_p
\|_{p \rightarrow p} \|\nabla h\|_{p} \\  
 \leqslant \,&C_3 \mu^{-\frac{1}{2}-\frac{1}{2p'}} \|\nabla h\|_p.
\end{align*}
Therefore,
\begin{align*}
\|\Theta_{p}h - (\mu-\Delta)^{-1}h\|_p  &\leqslant 
\|Q_{p}\|_{p \rightarrow p}   
\|\bigl(1+T_{p} \bigr)^{-1}\|_{p \rightarrow p} \|b_n^{\frac{1}{p}}  \cdot(\mu-\Delta)^{-1} \nabla h\|_p \\ 
&\leqslant 
C_0 \mu^{-\frac{3}{2}}  \|\nabla h\|_p,
\end{align*}
which clearly implies (i).

Proof of (ii). This is Proposition \ref{est_prop}(\textit{iii}).
\end{proof}

\begin{proposition}
\label{lem_20}
For every $p \in \mathcal I$, and $n=1,2,\dots$, we have $\mathcal O \subset \rho(-\Lambda_p(b_n))$, the resolvent set of $-\Lambda_p(b_n)$. The operator-valued function $\Theta_{p}(\zeta,b_n)$ is the resolvent of $-\Lambda_p(b_n)$:
\begin{equation*}
\Theta_{p}(\zeta,b_n)=(\zeta+\Lambda_p(b_n))^{-1}, \quad \zeta \in \mathcal O,
\end{equation*}
and
\begin{equation*}
\|(\zeta+\Lambda_p(b_n))^{-1}\|_{p \rightarrow p} \leqslant C_p|\zeta|^{-1}, \quad \zeta \in \mathcal O.
\end{equation*}

\end{proposition}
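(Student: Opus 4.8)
The plan is to show that $\Theta_p(\zeta,b_n)$ is the resolvent of $-\Lambda_p(b_n)$ by leveraging the fact (already established in the excerpt) that $b_n$ is bounded, so $\Lambda_p(b_n) = -\Delta + b_n\cdot\nabla$ with $D(\Lambda_p(b_n)) = W^{2,p}$ is a genuine operator defined classically, and the resolvent $(\zeta+\Lambda_p(b_n))^{-1}$ exists for $\Real\zeta$ large by standard perturbation theory (the bounded perturbation $b_n\cdot\nabla$ of the generator $-\Delta$). The key identity to verify is that, on $W^{2,p}$,
\[
\Theta_p(\zeta,b_n)(\zeta+\Lambda_p(b_n)) = \Id, \qquad (\zeta+\Lambda_p(b_n))\Theta_p(\zeta,b_n) = \Id, \quad \zeta \in \mathcal O.
\]
Once this is known for a single $\zeta_0 \in \mathcal O$ with $\Real\zeta_0$ sufficiently large, Proposition~\ref{lem_15} (the pseudo-resolvent identity) together with the uniform bound $\|\Theta_p(\zeta,b_n)\|_{p\to p} \leqslant C_p|\zeta|^{-1}$ from Proposition~\ref{lem_18}(ii) propagates the conclusion to all of $\mathcal O$: indeed, a pseudo-resolvent that agrees with a true resolvent at one point and has trivial kernel (which follows from $\mu\Theta_p(\mu,b_n)\overset{s}{\to}1$, Proposition~\ref{lem_18}(i)) is the resolvent of a single closed operator on its whole domain of definition.

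The concrete computation I would carry out: start from the algebraic identity for the Neumann-series form $\Theta_p(\zeta,b_n) = \sum_{k\geqslant 0} S_\zeta^k$ derived in the proof of Proposition~\ref{lem_15}, where $S_\zeta^k = (-1)^k(\zeta-\Delta)^{-1}b_n\cdot\nabla(\zeta-\Delta)^{-1}\cdots b_n\cdot\nabla(\zeta-\Delta)^{-1}$. Applying $(\zeta-\Delta)$ on the left to $\Theta_p(\zeta,b_n)g$ (legitimate since each $S_\zeta^k g \in W^{2,p}$ and the series telescopes nicely after one differentiation) yields
\[
(\zeta-\Delta)\Theta_p(\zeta,b_n)g = g - b_n\cdot\nabla\sum_{k\geqslant 0}S_\zeta^k g = g - b_n\cdot\nabla\,\Theta_p(\zeta,b_n)g,
\]
i.e. $(\zeta-\Delta + b_n\cdot\nabla)\Theta_p(\zeta,b_n)g = g$, which is precisely $(\zeta+\Lambda_p(b_n))\Theta_p(\zeta,b_n) = \Id$. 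For the other order, apply $\Theta_p(\zeta,b_n)$ to $(\zeta+\Lambda_p(b_n))u$ for $u \in W^{2,p}$ and use the same telescoping to get $u$ back. The convergence of the series in the relevant operator norms (and, crucially, the boundedness into $W^{2,p}$ of the relevant partial sums, using $\|b_n\cdot\nabla(\zeta-\Delta)^{-1}\|_{p\to p} < 1$ for $\Real\zeta$ large enough — this holds for $b_n$ bounded even if not for $b$) is what makes these manipulations rigorous.

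The main obstacle is the bookkeeping of domains and the justification that $(\zeta-\Delta)$ may be applied term-by-term to the series: one must check that $\sum_k (\zeta-\Delta)S_\zeta^k g$ converges in $L^p$, which requires a norm bound on $(\zeta-\Delta)S_\zeta^k g = -b_n\cdot\nabla\,(\zeta-\Delta)S_\zeta^{k-1}g$ of geometric type. Here boundedness of $b_n$ is essential and harmless — for fixed $n$ we have $|b_n|\leqslant n$, so $\|b_n\cdot\nabla(\zeta-\Delta)^{-1}\|_{p\to p}\leqslant n\,\|\nabla(\zeta-\Delta)^{-1}\|_{p\to p} \to 0$ as $\Real\zeta\to\infty$, giving a convergent geometric series for $\Real\zeta$ large. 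Then the pseudo-resolvent identity and the uniform-in-$\zeta$ bound of Proposition~\ref{lem_18} carry the equality $\Theta_p(\zeta,b_n) = (\zeta+\Lambda_p(b_n))^{-1}$ from that large-$\Real\zeta$ region to all of $\mathcal O$ by analytic continuation (both sides are analytic $\mathcal B(L^p)$-valued functions on $\mathcal O$, and a connected set of agreement forces agreement throughout). The final norm bound $\|(\zeta+\Lambda_p(b_n))^{-1}\|_{p\to p}\leqslant C_p|\zeta|^{-1}$ is then just a restatement of Proposition~\ref{est_prop}(iii)/Proposition~\ref{lem_18}(ii).
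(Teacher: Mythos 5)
Your proposal is correct and follows the same two-stage strategy the paper uses: first establish $\Theta_p(\zeta,b_n)=(\zeta+\Lambda_p(b_n))^{-1}$ for $\Real\zeta$ large (exploiting the boundedness of $b_n$), then propagate this to all of $\mathcal O$ via the pseudo-resolvent identity of Proposition~\ref{lem_15}. Where you differ from the paper's primary argument is in how you do the propagation. The paper uses the pseudo-resolvent identity directly and concretely: from $\Theta_p(\zeta_n,b_n)=(\zeta_n+\Lambda_p(b_n))^{-1}$ at a large $\zeta_n$, it writes
$$
\Theta_p(\zeta,b_n)=(\zeta_n+\Lambda_p(b_n))^{-1}\bigl(1+(\zeta_n-\zeta)\Theta_p(\zeta,b_n)\bigr),\quad \zeta\in\mathcal O,
$$
from which it reads off immediately that $\Theta_p(\zeta,b_n)L^p\subset D(\Lambda_p(b_n))=W^{2,p}$ and that the right- and left-inverse relations hold. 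You instead appeal to Kato's theorem on pseudo-resolvents (trivial kernel and dense range, supplied by Proposition~\ref{lem_18}(i)), which the paper mentions verbatim as the \emph{alternative} route in the remark following its proof. Both are legitimate; the paper's direct identity is more elementary and yields the domain inclusion explicitly, whereas your route delegates that bookkeeping to Kato's abstract result. Your Neumann-series telescoping computation is a welcome filling-in of the paper's unexplained ``Clearly,'' and is correct as long as one is careful that $\|b_n\cdot\nabla(\zeta-\Delta)^{-1}\|_{p\to p}<1$ is only guaranteed for $\Real\zeta$ large (as you note).

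One caution on your closing ``by analytic continuation'' remark: as stated it is circular, because you cannot assert that $(\zeta+\Lambda_p(b_n))^{-1}$ is an analytic $\mathcal B(L^p)$-valued function on all of $\mathcal O$ before you have proved that $\mathcal O\subset\rho(-\Lambda_p(b_n))$ --- which is the very conclusion sought. The argument can be repaired by combining the openness of the resolvent set with the $|\zeta|^{-1}$ bound on $\Theta_p$ (so that $\|(\zeta+\Lambda_p(b_n))^{-1}\|$ cannot blow up as $\zeta$ approaches a putative boundary point in $\mathcal O$), but this is really what the Kato pseudo-resolvent argument or the paper's direct identity already accomplishes; the extra analytic-continuation language is redundant and, in the form written, not quite rigorous.
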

\begin{proof}
By definition, we need to verify that, for every $\zeta \in \mathcal O$, $\Theta_p(\zeta,b_n)$ has dense image, and is the left and the right inverse of $\zeta+\Lambda_p(b_n)$. 
Indeed, Proposition \ref{lem_18}(i) implies that $\Theta_p(\zeta,b_n)$ has dense image.
$\Lambda_{p}(b_n):=-\Delta + b_n\cdot \nabla$, $D(\Lambda_p(b_n))=W^{2,p},$ is the generator of a $C_0$-semigroup $e^{-t\Lambda_p(b_n)}$ on $L^p$. 
Clearly,
$\Theta_p(\zeta_n,b_n)=(\zeta_n+\Lambda_p(b_n))^{-1}$ for all sufficiently large $\zeta_n~(=\zeta(\|b_n\|_\infty))$,
therefore, by Proposition \ref{lem_15},
$$
\Theta_p(\zeta,b_n) = (\zeta_n+\Lambda_p(b_n))^{-1}\bigl(1 + (\zeta_n - \zeta) \Theta_p(\zeta,b_n)\bigr), \quad \zeta \in \mathcal O,
$$
so $\Theta_p(\zeta,b_n)L^p \subset D(\Lambda_p(b_n))=W^{2,p}$, and $(\zeta+\Lambda_p(b_n))\Theta_p(\zeta,b_n)g=g$, $g \in L^p$, i.e.~$\Theta_p(\zeta,b_n)$ is the right inverse of $\zeta + \Lambda_p(b_n)$ on $\mathcal O$. Similarly, it is seen that
$\Theta(\zeta,b_n)$ is the left inverse of $\zeta + \Lambda_p(b_n)$ on $\mathcal O$.

\begin{remark}
Alternatively, we could verify conditions of the Kato theorem \cite{K4}: in the reflexive space $L^p$,  the pseudo-resolvent $\Theta_p(\zeta,b_n)$ (see Proposition \ref{lem_15}) 
satisfying
$\mu \Theta_{p}(\mu,b_n) \overset{s}{\rightarrow} 1$ in $L^p$ as $\mu\uparrow \infty$ (see Proposition \ref{lem_18}(ii))
is the resolvent of a densely defined closed operator on $L^p$. This operator coincides with $-\Lambda_p(b_n)$ (since $\Theta_p(\zeta_n,b_n)=(\zeta_n+\Lambda_p(b_n))^{-1}$ for all large $\zeta_n$).
\end{remark}

Now, $\|(\zeta+\Lambda_p(b_n))^{-1}\|_{p \rightarrow p} \leqslant C_p|\zeta|^{-1}$, $\zeta \in \mathcal O,$ follows from Proposition \ref{lem_18}(ii).
\end{proof}

\begin{proposition}
\label{lem_30}
For every $\zeta \in \mathcal O$ and $p \in \mathcal I$,
\begin{equation*}
\Theta_{p} (\zeta,b_n) \overset{s}{\rightarrow} \Theta_{p} (\zeta,b) \text{ in $L^p$},
\end{equation*}
\end{proposition}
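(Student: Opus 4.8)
The plan is to deduce $\Theta_p(\zeta,b_n)\overset{s}{\rightarrow}\Theta_p(\zeta,b)$ from the separate strong convergences of the three factors in the representation \eqref{R_p}, namely $G_p(\zeta,b_n)\overset{s}{\rightarrow}G_p(\zeta,b)$, $Q_p(\zeta,b_n)\overset{s}{\rightarrow}Q_p(\zeta,b)$ and $(1+T_p(\zeta,b_n))^{-1}\overset{s}{\rightarrow}(1+T_p(\zeta,b))^{-1}$, together with the elementary fact that if $A_n\overset{s}{\rightarrow}A$, $B_n\overset{s}{\rightarrow}B$, $C_n\overset{s}{\rightarrow}C$ in $\mathcal B(L^p)$ with $\sup_n(\|A_n\|+\|B_n\|+\|C_n\|)<\infty$, then $A_nB_nC_n\overset{s}{\rightarrow}ABC$ (telescope $A_nB_nC_nh-ABCh$). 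Two standing observations drive everything. First, $|b_n|\leqslant|b|$ pointwise, and for a.e.\ $x$ one has $b_n(x)\to b(x)$, hence $b_n^{\frac1p}(x)\to b^{\frac1p}(x)$ and $|b_n(x)|^{\frac1{p'}}\to|b(x)|^{\frac1{p'}}$. Second, by the Remark following Proposition~\ref{est_prop}, all norm bounds of that proposition hold for every $b_n$ with the same constants; in particular $\|T_p(\zeta,b_n)\|_{p\to p}\leqslant m_dc_p\delta<1$, so $\sup_n\|(1+T_p(\zeta,b_n))^{-1}\|_{p\to p}\leqslant(1-m_dc_p\delta)^{-1}$, while $\|Q_p(\zeta,b_n)\|_{p\to p}$ and $\|G_p(\zeta,b_n)\|_{p\to p}$ are bounded uniformly in $n$. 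Since $Q_p$ and $T_p$ are defined a priori only on $\mathcal E=\bigcup_{\epsilon>0}e^{-\epsilon|b|}L^p$, I also record that $\mathcal E$ is dense in $L^p$: for $g\in L^p$ one has $\mathbf 1_{\{|b|\leqslant k\}}g\in\mathcal E$ and $\mathbf 1_{\{|b|\leqslant k\}}g\to g$ in $L^p$ as $k\to\infty$. Combined with the uniform bounds, it then suffices to check the strong convergences of $Q_p$ and $T_p$ on $\mathcal E$ only.

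For $G_p$, which already acts boundedly on all of $L^p$, I would argue directly by dominated convergence: for $h\in L^p$,
$$
G_p(\zeta,b_n)h-G_p(\zeta,b)h=(b_n^{\frac1p}-b^{\frac1p})\cdot\nabla(\zeta-\Delta)^{-1}h\longrightarrow 0 \quad\text{a.e.,}
$$
and the left side is dominated pointwise by $2|b|^{\frac1p}|\nabla(\zeta-\Delta)^{-1}h|\leqslant 2m_d|b|^{\frac1p}(\kappa_d^{-1}\Real\zeta-\Delta)^{-\frac12}|h|$ (by \eqref{lem_lambda_est}), which lies in $L^p$ by item~(\textbf a) in the proof of Proposition~\ref{est_prop}. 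For $Q_p$ and $f=e^{-\epsilon|b|}g\in\mathcal E$, one has $|b|^{\frac1{p'}}|f|\leqslant C_\epsilon|g|\in L^p$, so $|b_n|^{\frac1{p'}}f\to|b|^{\frac1{p'}}f$ a.e.\ and is dominated by $|b|^{\frac1{p'}}|f|$; hence $|b_n|^{\frac1{p'}}f\to|b|^{\frac1{p'}}f$ in $L^p$, and applying the bounded operator $(\zeta-\Delta)^{-1}$ gives $Q_p(\zeta,b_n)f\to Q_p(\zeta,b)f$ in $L^p$.

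For $T_p$ and $f=e^{-\epsilon|b|}g\in\mathcal E$ I would split
\begin{align*}
T_p(\zeta,b_n)f-T_p(\zeta,b)f=\;&b_n^{\frac1p}\cdot\nabla(\zeta-\Delta)^{-1}\bigl((|b_n|^{\frac1{p'}}-|b|^{\frac1{p'}})f\bigr)\\
&+(b_n^{\frac1p}-b^{\frac1p})\cdot\nabla(\zeta-\Delta)^{-1}\bigl(|b|^{\frac1{p'}}f\bigr).
\end{align*}
Using $|b_n|\leqslant|b|$ and \eqref{lem_lambda_est}, the first term is bounded in $L^p$ by $m_d\,\||b|^{\frac1p}(\kappa_d^{-1}\Real\zeta-\Delta)^{-\frac12}|\phi_n|\|_p$, where $\phi_n:=(|b_n|^{\frac1{p'}}-|b|^{\frac1{p'}})f\to 0$ in $L^p$ (dominated convergence, as in the $Q_p$ step), so item~(\textbf a) of the proof of Proposition~\ref{est_prop} makes it $\to 0$. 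For the second term, put $w:=\nabla(\zeta-\Delta)^{-1}(|b|^{\frac1{p'}}f)$; then $(b_n^{\frac1p}-b^{\frac1p})\cdot w\to 0$ a.e.\ and is dominated by $2|b|^{\frac1p}|w|\leqslant 2m_d|b|^{\frac1p}(\kappa_d^{-1}\Real\zeta-\Delta)^{-\frac12}(|b|^{\frac1{p'}}|f|)\in L^p$ (again by \eqref{lem_lambda_est} and item~(\textbf a), since $|b|^{\frac1{p'}}|f|\in L^p$), so it $\to 0$ by dominated convergence. Thus $T_p(\zeta,b_n)f\to T_p(\zeta,b)f$ on $\mathcal E$, hence $T_p(\zeta,b_n)\overset{s}{\rightarrow}T_p(\zeta,b)$ on $L^p$; then the identity
$$
(1+T_p(\zeta,b_n))^{-1}-(1+T_p(\zeta,b))^{-1}=(1+T_p(\zeta,b_n))^{-1}\bigl(T_p(\zeta,b)-T_p(\zeta,b_n)\bigr)(1+T_p(\zeta,b))^{-1}
$$
together with the uniform bound on $(1+T_p(\zeta,b_n))^{-1}$ gives $(1+T_p(\zeta,b_n))^{-1}\overset{s}{\rightarrow}(1+T_p(\zeta,b))^{-1}$. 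Feeding the three convergences into the product lemma of the first paragraph and into \eqref{R_p} yields the claim. I expect the only genuine difficulty to be exactly the handling of $Q_p$ and $T_p$ away from $\mathcal E$, i.e.\ producing $L^p$-integrable dominating functions for the dominated-convergence arguments; this is what forces the use of the pointwise kernel bound \eqref{lem_lambda_est} and of estimate~(\textbf a) from Proposition~\ref{est_prop} rather than the mere operator-norm bounds.
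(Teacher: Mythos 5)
Your argument is correct and is essentially the paper's own proof: the same reduction of $\Theta_p(\zeta,b_n)\overset{s}{\to}\Theta_p(\zeta,b)$ to the strong convergences of $G_p$, $(1+T_p)^{-1}$, $Q_p$ with uniform norm bounds, the same resolvent-difference identity reducing the middle factor to $T_p(\zeta,b_n)\overset{s}{\to}T_p(\zeta,b)$, and exactly the same two-term decomposition of $T_p(\zeta,b_n)f-T_p(\zeta,b)f$ on $\mathcal E$ (your "first term" and "second term" are the paper's $J_n^{(2)}$ and $J_n^{(1)}$, respectively), handled by dominated convergence via \eqref{lem_lambda_est} and estimate~(\textbf a). You spell out a few steps the paper leaves implicit (density of $\mathcal E$, the pointwise dominating functions), but there is no difference in method.
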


\begin{proof}
Put $\Theta_p(b) \equiv \Theta_p(\zeta,b)$, $Q_p(b) \equiv Q_p(\zeta,b)$, $T_p(b) \equiv T_p(\zeta,b)$, $G_p(b) \equiv G_p(\zeta,b)$ (similarly for $b_n$'s). 
It suffices to prove that
\[
Q(b_n)(1+T(b_n))^{-1}G(b_n) \overset{s}\rightarrow Q(b)(1+T(b))^{-1}G(b).
\] 
Thus it suffices to prove consecutively that
\[
G(b_n) \overset{s}\rightarrow G(b), \; (1+T(b_n))^{-1} \overset{s}\rightarrow (1+T(b))^{-1}, \; Q(b_n) \overset{s}\rightarrow Q(b).
\]
In turn, since $(1+T(b_n))^{-1}-(1+T(b))^{-1}= (1+T(b_n))^{-1}(T(b)-T(b_n))(1+T(b))^{-1},$ it suffices to prove that $T(b_n)\overset{s}\rightarrow T(b).$ Finally,
\[
T(b_n)-T(b)= T(b_n)- b_n^{\frac{1}{p}} \cdot\nabla (\zeta-\Delta)^{-1}|b|^{\frac{1}{p'}}+ b_n^{\frac{1}{p}} \cdot\nabla (\zeta-\Delta)^{-1}|b|^{\frac{1}{p'}} - T(b),
\]
and hence we have to prove that
\[
 b_n^{\frac{1}{p}} \cdot \nabla  (\zeta-\Delta)^{-1}|b|^{\frac{1}{p'}} - T(b):=J_n^{(1)} \overset{s}\rightarrow  0 \text{ and } T(b_n)- b_n^s \cdot \nabla (\zeta-\Delta)^{-1}|b|^{\frac{1}{p'}}:= J_n^{(2)} \overset{s}\rightarrow 0.
 \]
 Now, by the Dominated Convergence Theorem (cf.~the argument in the proof of \eqref{A0}), $G(b_n) \overset{s}\rightarrow G(b),$ $J_n^{(1)}|_\mathcal{E} \overset{s}\rightarrow 0.$ Also
 \begin{align*}
\| J_n^{(2)}f \|_p= & \|G(b_n)(|b_n|^{\frac{1}{p'}}-|b|^{\frac{1}{p'}})f \|_p \\
& \leqslant \|G(b_n)\|_{p \to p} \|(|b_n|^{\frac{1}{p'}}-|b|^{\frac{1}{p'}})f \|_p \\
& \leqslant m_d(1+\delta)|\zeta|^{-\frac{1}{2p'}} \|(|b_n|^{\frac{1}{p'}}-|b|^{\frac{1}{p'}})f \|_p, \quad (f \in \mathcal{E}).
\end{align*}
Thus, $ J_n^{(2)}|_ \mathcal{E} \overset{s}\rightarrow 0.$ Since $\|J_n^{(2)}\|_{p \to p}, \|J_n^{(1)}\|_{p \to p} \leqslant m_d \delta,$ we conclude that $T(b_n)\overset{s}\rightarrow T(b).$
It is clear now that $ Q(b_n) \overset{s}\rightarrow Q(b)$.
\end{proof}

Now we are going to prove Theorem \ref{thm1} using the Trotter approximation theorem \cite[IX.2.6]{K}. Recall its conditions (in terms of $\Theta_{p}(\zeta,b_n)$ on the base of Proposition \ref{lem_20}):

1) $\sup_{n \geqslant 1}\|\Theta_{p}(\zeta,b_n)\|_{p \rightarrow p} \leqslant C_p|\zeta|^{-1}$, $\zeta \in \mathcal O$. 

2) 
$\mu \Theta_{p}(\mu,b_n) \overset{s}{\rightarrow} 1 \text{ in $L^p$ as } \mu\uparrow \infty \text{ uniformly in $n$}.$

3) There exists ${\text{\small $s$-$L^p$-}}\lim_n\Theta_{p} (\zeta,b_n)$ for some $\zeta \in \mathcal O$.

Now, 1) is the content of Proposition  \ref{lem_18}(ii). The proof of Proposition \ref{lem_18}(i), in fact, yields 2). Proposition \ref{lem_30} implies 3).

Therefore, by the Trotter approximation theorem, 
  $\Theta_p(\zeta,b)=(\zeta+\Lambda_p(b))^{-1}$, $\zeta \in \mathcal O,$
where $\Lambda_p(b)$ is the generator of the holomorphic $C_0$-semigroup $e^{-t\Lambda_p(b)}$ on $L^p$. 

Hence, the assertions (\textit{i}), (\textit{vi}) of Theorem \ref{thm1} follow.
(\textit{ii}) follows from Proposition  \ref{lem_18}(ii) and Proposition \ref{lem_30}.
 (\textit{iii}) is obvious from the definitions of the operators involved, cf.~Proposition \ref{est_prop}.  (\textit{iii}) $\Rightarrow$ (\textit{iv}).

(\textit{v}) Let $ \zeta \in \mathcal O$. 
By Proposition \ref{lem_30}, $\Lambda_p(b_n)(\zeta+\Lambda_p(b_n))^{-1} \overset{s}{\rightarrow} \Lambda_p(b)(\zeta+\Lambda_p(b))^{-1}$ in $L^p$. Put $Q_p(b) \equiv Q_p(\zeta,b)$, $T_p(b) \equiv T_p(\zeta,b)$, $G_p(b) \equiv G_p(\zeta,b)$ (similarly for $b_n$'s). Since 
$
(\zeta+\Lambda_p(b))^{-1} = (\zeta - \Delta)^{-1} - Q_p(b) (1 + T_p(b))^{-1} G_p(b)
$,
we have
$$b^{\frac{1}{p}}\cdot\nabla (\zeta+\Lambda_p(b))^{-1} = G_p(b) - T_p(b)(1+T_p(b))^{-1}G_p(b)$$ (similarly for $b_n$'s). Since $G_p(b_n) \overset{s}{\rightarrow} G_p(b)$, $T_p(b_n) \overset{s}{\rightarrow} T_p(b)$ in $L^p$ (see the proof of Proposition \ref{lem_30}),
\begin{equation}
\label{conv_5}
\tag{$\ast \ast$}
b_n^{\frac{1}{p}}\cdot\nabla (\zeta+\Lambda_p(b_n))^{-1}  \overset{s}{\rightarrow} b^{\frac{1}{p}}\cdot\nabla (\zeta+\Lambda_p(b))^{-1} \text{ in } L^p. 
\end{equation}
Now, given $u \in D(\Lambda_p(b))$, we have
$u=(\zeta+\Lambda_p(b))^{-1}g$ for some $g \in L^p$, and so, for every $v\in C_c^\infty$,
\begin{align*}
\langle \Lambda_p(b)u,v \rangle &= \langle \Lambda_p(b)(\zeta+\Lambda_p(b))^{-1}g,v \rangle \\ 
&= \lim_n \langle \Lambda_p(b_n)(\zeta+\Lambda_p(b_n))^{-1}g,v \rangle \\ &= \lim_n \langle (\zeta+\Lambda_p(b_n))^{-1}g,-\Delta v\rangle + \lim_n  \langle b_n^{\frac{1}{p}}\cdot\nabla (\zeta+\Lambda_p(b_n))^{-1}g,|{b}_n|^{\frac{1}{p'}} v\rangle \\
& (\text{here we are using \eqref{conv_5} and the fact that $|{b}_n|^{\frac{1}{p'}} v \rightarrow |b|^{\frac{1}{p'}}\, v$ in $L^{p'}$}) \\
&=\langle  (\zeta+\Lambda_p(b))^{-1}g,-\Delta v\rangle + \langle b^{\frac{1}{p}}\cdot\nabla (\zeta+\Lambda_p(b))^{-1}g,|b|^{\frac{1}{p'}}\, v\rangle \\
&=\langle  u ,-\Delta v\rangle +\langle b^{\frac{1}{p}}\cdot\nabla u , |b|^{\frac{1}{p'}}v\rangle.
\end{align*}
Next, by the definition of class $\mathbf{F}_\delta^{\scriptscriptstyle \frac{1}{2}}$, $|b| \in L^1_{\loc}$.
Since for $u \in D(\Lambda_p(b))$, $b^{\frac{1}{p}}\cdot\nabla u \in L^p$, it follows that  $b\cdot \nabla u=|b|^{\frac{1}{p'}} \; b^{\frac{1}{p}}\cdot\nabla u \in L^1_{\loc}$. Also, $\Lambda_p(b)u \in L^p$, and hence $\langle \Lambda_p(b)u,v \rangle  = \langle u ,-\Delta v\rangle +\langle b\cdot \nabla u , v\rangle$. Therefore, $\Delta u \text{ (understood in the sense of distributions)}= -\Lambda_p(b)u + b \cdot \nabla u \in L^1_{\loc},
$
i.e.~$u \in \mathcal W^{2,1}_{\loc}$. The proof of (\textit{v}) is completed.

For the proof of (\textit{viii}) see the argument in \cite[p.~415-416]{S}.

The proof of Theorem \ref{thm1} is completed.

\section{Proof of Theorem \ref{thm2}}


It is easily seen that, due to the strict inequality $m_d \delta<4\frac{d-2}{(d-1)^2}$, 
for every $\tilde{\delta}>\delta$ such that $m_d \tilde{\delta}<4\frac{d-2}{(d-1)^2}$ there  exist $\varepsilon_n>0$, $\varepsilon_n\downarrow 0$, such that 
$$
\tilde{b}_{n}:=\eta_{\varepsilon_n} \ast b_n \in \mathbf{F}_{\tilde{\delta}}^{\scriptscriptstyle \frac{1}{2}}, \quad n=1,2,\dots
$$

\noindent (\textit{i}) We verify conditions of the Trotter approximation theorem:
\smallskip

$1^{\circ}$) $\sup_n\|(\mu+\Lambda_{C_\infty}(\tilde{b}_n))^{-1}\|_{\infty \rightarrow \infty} \leqslant 1$, $\mu \geqslant \kappa_d \lambda$.

$2^{\circ}$) $\mu (\mu+\Lambda_{C_\infty}(\tilde{b}_n))^{-1} \rightarrow 1$ in $C_\infty$ as $\mu \uparrow \infty$ uniformly in $n$.

$3^{\circ}$) There exists $\text{\small $s\text{-}C_\infty\text{-}$}\lim_n (\mu+\Lambda_{C_\infty}(\tilde{b}_n))^{-1}$ for some $\mu \geqslant \kappa_d \lambda$.

\smallskip

The condition $1^{\circ}$) is immediate. In view of $1^{\circ}$), it suffices to verify $2^{\circ}$), $3^{\circ}$) 
on $\mathcal S$, the  L.~Schwartz space of test functions. Fix $p \in \mathcal I$, $p>d-1$ (such $p$ exists since $m_d \tilde{\delta}<4\frac{d-2}{(d-1)^2}$). 


\begin{proposition}
\label{prop_id}
For every $\mu \geqslant \kappa_d\lambda$, $n=1,2,\dots$,
$
\Theta_p(\mu,\tilde{b}_n)\mathcal S\subset \mathcal S,
$ and
$$
(\mu+\Lambda_{C_\infty}(\tilde{b}_n))^{-1}|_{\mathcal S} = \Theta_p(\mu,\tilde{b}_n)|_{\mathcal S}.
$$
\end{proposition}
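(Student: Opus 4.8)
The plan is to treat $\tilde b_n$ as a \emph{bounded, smooth} vector field, for which $\Lambda_{C_\infty}(\tilde b_n)=-\Delta+\tilde b_n\cdot\nabla$ with domain $C^2\cap C_\infty$ generates a Feller semigroup by classical theory, and $\Lambda_p(\tilde b_n)=-\Delta+\tilde b_n\cdot\nabla$ with domain $W^{2,p}$ generates a $C_0$-semigroup on $L^p$; moreover the two resolvents agree on $L^p\cap C_\infty$ by uniqueness of the classical solution of $(\mu+\Lambda)u=g$. By Proposition~\ref{lem_20}, $\Theta_p(\mu,\tilde b_n)=(\mu+\Lambda_p(\tilde b_n))^{-1}$ for $\mu\geqslant\kappa_d\lambda$, so the content of the proposition reduces to two points: (a) $\Theta_p(\mu,\tilde b_n)$ maps $\mathcal S$ into $\mathcal S$; and (b) for $g\in\mathcal S$ the function $u:=\Theta_p(\mu,\tilde b_n)g$, which lies in $W^{2,p}$ and (being in $\mathcal S\subset C_\infty$, with $-\Delta u+\tilde b_n\cdot\nabla u=(\mu+\Lambda_p(\tilde b_n))u=g$ pointwise a.e., hence classically after bootstrapping) solves $(\mu+\Lambda_{C_\infty}(\tilde b_n))u=g$; combined with $u\in C^2\cap C_\infty$ this identifies $u$ with $(\mu+\Lambda_{C_\infty}(\tilde b_n))^{-1}g$.

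For (a), I would read off from the factored form $\Theta_p(\mu,\tilde b_n)=(\mu-\Delta)^{-1}-Q_p(1+T_p)^{-1}G_p$ (equivalently from the absolutely convergent Neumann series $\sum_k S_\mu^k$ of Proposition~\ref{lem_15}) that, since $\tilde b_n\in C^\infty$ with \emph{all} derivatives bounded (it is a mollification of the bounded field $b_n$), each building block $(\mu-\Delta)^{-1}$ and multiplication by $\tilde b_n$ (or by $\tilde b_n^{1/p}$, $|\tilde b_n|^{1/p'}$, which are smooth and bounded together with all derivatives once $|b|>0$ is handled — or simply work with the $b_n\cdot\nabla$ form directly as in $S_\mu^k$) preserves $\mathcal S$: $(\mu-\Delta)^{-1}$ preserves $\mathcal S$ (it acts as a Fourier multiplier by the Schwartz function $(\mu+|\xi|^2)^{-1}$), $\nabla$ preserves $\mathcal S$, and multiplication by a smooth function all of whose derivatives are polynomially bounded preserves $\mathcal S$. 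Each partial sum $\sum_{k\le N}S_\mu^k g$ therefore lies in $\mathcal S$; and the series converges in $L^p$ — to get the \emph{limit} in $\mathcal S$ one uses that, applying the same argument with $(\mu-\Delta)^{-m}$ for large $m$, one controls higher Sobolev norms uniformly, so the series converges in every $W^{k,p}$ and (by Sobolev embedding, on $\mathbb R^d$, together with the decay coming from the $(\mu-\Delta)^{-1}$'s) in $\mathcal S$. Alternatively, and more cheaply, use that $u\in W^{2,p}$ solves $(\mu-\Delta)u=g-\tilde b_n\cdot\nabla u$ and bootstrap: the right side is in $W^{1,p}$, so $u\in W^{3,p}$, hence $\tilde b_n\cdot\nabla u\in W^{2,p}$, so $u\in W^{4,p}$, etc.; thus $u\in\bigcap_k W^{k,p}$, and the decay of $u$ follows from $u=(\mu-\Delta)^{-1}(g-\tilde b_n\cdot\nabla u)$ with the right side in $L^p\cap L^\infty$ plus elliptic decay estimates — giving $u\in\mathcal S$ once one also tracks polynomial decay of all derivatives, which again propagates through $(\mu-\Delta)^{-1}$ since $\tilde b_n$ and its derivatives are bounded.

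For (b), with $u=\Theta_p(\mu,\tilde b_n)g\in\mathcal S$ in hand and $(\mu+\Lambda_p(\tilde b_n))u=g$ holding in $L^p$ (Proposition~\ref{lem_20}), the equation $-\Delta u+\tilde b_n\cdot\nabla u+\mu u=g$ holds pointwise (all terms are continuous), so $u\in C^2\cap C_\infty$ and $(\mu+\Lambda_{C_\infty}(\tilde b_n))u=g$; since $\mu\geqslant\kappa_d\lambda$ lies in the resolvent set of $-\Lambda_{C_\infty}(\tilde b_n)$ (by $1^{\circ}$, the generator bound, this $\mu$ is in the resolvent set for $\mu>0$ at least, and quasi-boundedness handles the rest), $u=(\mu+\Lambda_{C_\infty}(\tilde b_n))^{-1}g$. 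This gives the asserted identity on $\mathcal S$; it extends to $L^p\cap C_\infty$ by density if needed, but $\mathcal S$ is all that is used in verifying $2^{\circ}$ and $3^{\circ}$.

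The main obstacle is the clean verification of $\Theta_p(\mu,\tilde b_n)\mathcal S\subset\mathcal S$: the $L^p$-bounds of Proposition~\ref{est_prop} do not by themselves see Schwartz decay, so one genuinely needs the bootstrap/regularity argument above, which is where the smoothness and boundedness of \emph{all} derivatives of the mollified field $\tilde b_n$ (as opposed to merely $\tilde b_n\in\mathbf F^{1/2}_{\tilde\delta}$) is essential. I expect the rest — identifying the $L^p$-resolvent with the $C_\infty$-resolvent on the common core — to be routine.
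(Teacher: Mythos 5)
Your proof is correct, and for the identification step it takes a more direct route than the paper's. The paper first establishes $\Theta_p(\mu_n,\tilde b_n)|_{\mathcal S}=(\mu_n+\Lambda_{C_\infty}(\tilde b_n))^{-1}|_{\mathcal S}$ at sufficiently large $\mu_n$ (where both sides are given by the same convergent Neumann series), then uses the pseudo-resolvent identity of Proposition~\ref{lem_15} to propagate the equality to all $\mu\geqslant\kappa_d\lambda$, exhibiting $\Theta_p(\mu,\tilde b_n)|_{\mathcal S}$ as a two-sided inverse of $(\mu+\Lambda_{C_\infty}(\tilde b_n))|_{\mathcal S}$ --- the same device used in the proof of Proposition~\ref{lem_20}. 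You instead identify the two operators at $\mu$ in a single step: once $u=\Theta_p(\mu,\tilde b_n)g\in\mathcal S\subset C^2\cap C_\infty$ and the $L^p$-resolvent equation holds pointwise, $u$ solves $(\mu+\Lambda_{C_\infty}(\tilde b_n))u=g$ classically, and uniqueness finishes since $\mu>0$ lies in the resolvent set of $-\Lambda_{C_\infty}(\tilde b_n)$, which for smooth bounded $\tilde b_n$ generates a contraction $C_0$-semigroup on $C_\infty$ by classical theory. Two small remarks. First, the resolvent-set membership is not really a consequence of $1^{\circ}$, which already presupposes it; what you are actually invoking is the classical generation theorem for smooth bounded drift, which the paper also relies on in calling $1^{\circ}$ ``immediate.'' Second, both you and the paper leave the key invariance $\Theta_p(\mu,\tilde b_n)\mathcal S\subset\mathcal S$ essentially at the level of assertion: the paper calls it ``clear,'' and your bootstrap handles $C^\infty$-regularity cleanly but leaves the Schwartz \emph{decay} as a sketch. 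The decay is the genuinely subtle half (one needs, for instance, exponential off-diagonal decay of the kernel of $(\mu-\Delta+\tilde b_n\cdot\nabla)^{-1}$ for $\mu>0$, or a weighted $\langle x\rangle^N$-commutator argument closed by induction on $N$), though you are at the same level of rigor as the paper on this point.
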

\begin{proof} 
The inclusion $\Theta_p(\mu,\tilde{b}_n)\mathcal S\subset \mathcal S$ is clear.
Clearly,
$\Theta_p(\mu_n,\tilde{b}_n)|_{\mathcal S}=(\mu_n+\Lambda_{C_\infty}(\tilde{b}_n))^{-1}|_{\mathcal S}$ for all sufficiently large $\mu_n~(=\mu(\|\tilde{b}_n\|_\infty))$.
By $\Theta_p(\mu,\tilde{b}_n)\mathcal S\subset \mathcal S$ and Proposition \ref{lem_15}, $\Theta_p(\mu,\tilde{b}_n)|_{\mathcal S}$ satisfies the resolvent identity on $\mu \geqslant \kappa_d\lambda$, 
$$
\Theta_p(\mu,\tilde{b}_n)|_{\mathcal S} = (\mu_n+\Lambda_{C_\infty}(\tilde{b}_n))^{-1}\bigl(1 + (\mu_n - \mu) \Theta_p(\mu,\tilde{b}_n)\bigr)|_{\mathcal S}, \quad \mu \geqslant \kappa_d\lambda,
$$
so $\Theta_p(\mu,\tilde{b}_n)|_{\mathcal S}$ is the right inverse of $\mu + \Lambda_{C_\infty}(\tilde{b}_n)|_{\mathcal S}$ on $\mu \geqslant \kappa_d\lambda$. Similarly, it is seen that
$\Theta_p(\mu,\tilde{b}_n)|_{\mathcal S}$ is the left inverse of $\mu + \Lambda_{C_\infty}(\tilde{b}_n)|_{\mathcal S}$ on $\mu \geqslant \kappa_d\lambda$.
\end{proof}

\begin{proposition}
\label{prop_1}
For every $\mu \geqslant \kappa_d\lambda$,
$\Theta_p(\mu,b)\mathcal S \subset C_\infty,$ and
$$
\Theta_p(\mu,\tilde{b}_n) \overset{s}{\rightarrow} \Theta_p(\mu,b) \text{ in $C_\infty$}.
$$
\end{proposition}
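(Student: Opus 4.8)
The plan is to prove both assertions of Proposition~\ref{prop_1} by reducing everything to the convergence statements already established for the $L^p$-theory in Section~\ref{thm1_proof}, together with the Sobolev embedding $\mathcal W^{1+\frac{1}{q},p}\hookrightarrow C^{0,\alpha}\subset C_\infty$ valid for $p>d-1$ and a suitable $q>p$ (this is precisely the mechanism behind Theorem~\ref{thm1}(\textit{iv})). First I would record that $\tilde b_n\in\mathbf F^{\scriptscriptstyle\frac12}_{\tilde\delta}$ with a fixed $\tilde\delta$ independent of $n$, so all the operator bounds in Proposition~\ref{est_prop} (hence the factorization $\Theta_p(\mu,b)=(\zeta-\Delta)^{-\frac12-\frac1{2q}}Q_p(q)(1+T_p)^{-1}G_p(r)(\zeta-\Delta)^{-\frac1{2r'}}$ from Theorem~\ref{thm1}(\textit{iii})) hold uniformly in $n$; the same bounds hold for the limiting field $b$. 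In particular $\Theta_p(\mu,b)\in\mathcal B(\mathcal W^{-\frac1{r'},p},\mathcal W^{1+\frac1{q},p})$, and since $\mathcal S\subset\mathcal W^{-\frac1{r'},p}$ this gives $\Theta_p(\mu,b)\mathcal S\subset\mathcal W^{1+\frac1{q},p}\hookrightarrow C^{0,\alpha}\subset C_\infty$, proving the first claim.

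For the strong convergence, the key point is that the embedding $\mathcal W^{1+\frac1{q},p}\hookrightarrow C_\infty$ is \emph{continuous}, so it suffices to prove $\Theta_p(\mu,\tilde b_n)\to\Theta_p(\mu,b)$ strongly in $\mathcal W^{1+\frac1{q},p}$ — or, equivalently, since all operators factor through $(\zeta-\Delta)^{-\frac12-\frac1{2q}}$ which is an isometry $L^p\to\mathcal W^{1+\frac1{q},p}$, to prove strong $L^p$-convergence of the ``core'' operator $Q_p(q)(1+T_p)^{-1}G_p(r)(\zeta-\Delta)^{-\frac1{2r'}}$ on $\mathcal S$ (the leading term $(\zeta-\Delta)^{-1}$ is $n$-independent). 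This in turn reduces, exactly as in the proof of Proposition~\ref{lem_30}, to proving the three strong $L^p$-convergences $G_p(r,\tilde b_n)\overset{s}\rightarrow G_p(r,b)$, $(1+T_p(\tilde b_n))^{-1}\overset{s}\rightarrow(1+T_p(b))^{-1}$, and $Q_p(q,\tilde b_n)\overset{s}\rightarrow Q_p(q,b)$, each of which follows from $|\tilde b_n|\to|b|$ in a suitable local sense (dominated convergence after mollification, using $|\eta_{\varepsilon_n}\ast b_n|$ controlled appropriately) combined with the uniform-in-$n$ operator norm bounds; the convergence $T_p(\tilde b_n)\overset{s}\rightarrow T_p(b)$ is handled by the same splitting $T_p(\tilde b_n)-T_p(b)=J_n^{(1)}+J_n^{(2)}$ as in Proposition~\ref{lem_30}.

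The main obstacle I anticipate is the convergence $\tilde b_n\to b$ in the measure-theoretic sense needed for the dominated-convergence arguments: $\tilde b_n=\eta_{\varepsilon_n}\ast b_n$ involves \emph{both} the truncation $b_n$ and the mollification, so one must check that $|\tilde b_n|^{1/p}\to|b|^{1/p}$ and $|\tilde b_n|^{1/p'}\to|b|^{1/p'}$ in $L^{1}_{\loc}$ (or along a subsequence a.e.\ with an $L^1_{\loc}$ dominating function), which requires knowing $|b|\in L^1_{\loc}$ — true since $b\in\mathbf F^{\scriptscriptstyle\frac12}_\delta$ — and a careful choice of $\varepsilon_n$ relative to the truncation level $n$. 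A secondary subtlety is that the test functions sit in $\mathcal W^{-\frac1{r'},p}$ rather than $L^p$, so the $G_p(r)$ end of the factorization must be handled with the $(\zeta-\Delta)^{-\frac1{2r'}}$ smoothing absorbed first; but since $(\zeta-\Delta)^{-\frac1{2r'}}$ maps $\mathcal S$ into $L^p$ and is $n$-independent, this causes no real trouble. Once these convergences are in hand, composing them (using uniform boundedness to pass limits through products, exactly as in Proposition~\ref{lem_30}) yields $\Theta_p(\mu,\tilde b_n)h\to\Theta_p(\mu,b)h$ in $\mathcal W^{1+\frac1{q},p}$, hence in $C_\infty$, for every $h\in\mathcal S$, and density of $\mathcal S$ together with the uniform bound $\|\Theta_p(\mu,\tilde b_n)\|_{\infty\to\infty}\leqslant 1$ (from $1^\circ$)) upgrades this to strong convergence on all of $C_\infty$.
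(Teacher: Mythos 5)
Your proposal is correct and follows essentially the same route as the paper: reduce to the factorization of $\Theta_p$ from Theorem~\ref{thm1}(\textit{iii}), use the embedding $\mathcal W^{1+\frac1q,p}\hookrightarrow C_\infty$ (valid for $p>d-1$ and $q$ close to $d-1$), and then establish the three strong $L^p$-convergences $G\overset{s}\rightarrow G$, $(1+T)^{-1}\overset{s}\rightarrow(1+T)^{-1}$, $Q(q)\overset{s}\rightarrow Q(q)$ by the arguments of Proposition~\ref{lem_30}. The only cosmetic difference is that the paper keeps $G_p$ unfactored on the right (so everything acts directly on $L^p$, avoiding the $\mathcal W^{-\frac1{r'},p}$ detour you flag), but that point you handle correctly anyway.
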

\begin{proof}
By Theorem \ref{thm1}(\textit{iv}), since $p>d-1$, 
$\Theta_p(\mu,b) L^p \subset C_\infty$. 
Put $$Q_p(q,b) \equiv Q_p(q,\mu,b), \quad T_p(b) \equiv T_p(\mu,b), \quad G_p(b) \equiv G_p(\mu,b).$$  
To establish the required convergence, it suffices to prove that
\[
(\mu-\Delta)^{-\frac{1}{2}-\frac{1}{2q}}Q(q,\tilde{b}_n)(1+T(\tilde{b}_n))^{-1}G(\tilde{b}_n) \overset{s}\rightarrow (\mu-\Delta)^{-\frac{1}{2}-\frac{1}{2q}}Q(q,b)(1+T(b))^{-1}G(b) \text{ in $C_\infty$}.
\] 
We choose $q~(>p)$ close to $d-1$ so that $(\mu-\Delta)^{-\frac{1}{2}-\frac{1}{2q}}L^p \hookrightarrow C_\infty$.
Thus it suffices to prove that
\[
G(\tilde{b}_n) \overset{s}\rightarrow G(b), \; (1+T(\tilde{b}_n))^{-1} \overset{s}\rightarrow (1+T(b))^{-1}, \; Q(q,\tilde{b}_n) \overset{s}\rightarrow Q(q,b) \text{ in $L^p$},
\]
which can be done by repeating the arguments in the proof of Proposition \ref{lem_30}.
\end{proof}

\begin{proposition}
\label{lem_4}
\begin{equation}
\label{star5}
\mu \Theta_{p}(\mu,\tilde{b}_n) \overset{s}{\rightarrow} 1 \text{  as $\mu \uparrow \infty$ in $C_\infty$ uniformly in $n$}.
\end{equation}
\end{proposition}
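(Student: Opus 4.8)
The plan is to reduce the uniform-in-$n$ convergence $\mu\Theta_p(\mu,\tilde b_n)\xrightarrow{s}1$ in $C_\infty$ to the norm estimates already available from Proposition \ref{est_prop}, exactly as in the proof of Proposition \ref{lem_18}(i), but now tracking the $C_\infty$-norm instead of the $L^p$-norm and taking care that the bounds are independent of $n$. The starting point is the same algebraic identity: for $h\in\mathcal S$ (which is dense in $C_\infty$, and on which it suffices to prove convergence in view of the uniform bound $\|\mu\Theta_p(\mu,\tilde b_n)\|_{\infty\to\infty}\leqslant 1$ from $1^\circ$) and Proposition \ref{prop_id}),
\[
\Theta_p(\mu,\tilde b_n)h-(\mu-\Delta)^{-1}h=-Q_p(q)(\mu,\tilde b_n)\,(1+T_p(\mu,\tilde b_n))^{-1}\,\tilde b_n^{\frac1p}\cdot(\mu-\Delta)^{-1}\nabla h,
\]
after inserting the factorization of $\Theta_p$ from Theorem \ref{thm1}(\textit{iii}) with a fixed $q>p$ chosen close to $d-1$ so that $(\mu-\Delta)^{-\frac12-\frac1{2q}}L^p\hookrightarrow C_\infty$ (possible since $p>d-1$, as in Proposition \ref{prop_1}).

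Next I would chain the estimates. Since $\mu(\mu-\Delta)^{-1}\xrightarrow{s}1$ in $C_\infty$ (standard, and uniform since it is $n$-independent), it suffices to show $\mu\Theta_p(\mu,\tilde b_n)h-\mu(\mu-\Delta)^{-1}h\to0$ in $C_\infty$ uniformly in $n$. Writing the right-hand side above as $(\mu-\Delta)^{-\frac12-\frac1{2q}}\cdot\big[Q_p(q)(1+T_p)^{-1}\tilde b_n^{\frac1p}\cdot(\mu-\Delta)^{-1}\nabla h\big]$ and using that $(\mu-\Delta)^{-\frac12-\frac1{2q}}\colon L^p\to C_\infty$ is bounded with norm $O(\mu^{-\sigma})$ for some $\sigma>0$ (by the Sobolev embedding $\mathcal W^{1+\frac1q,p}\hookrightarrow C_\infty$ for $p>d-1$, together with the scaling $\|(\mu-\Delta)^{-\frac12-\frac1{2q}}\|_{p\to\mathcal W^{1+\frac1q,p}}\leqslant C\mu^{-\sigma}$, $\sigma=\frac12+\frac1{2q}-\frac12-\frac1{2q}$... more precisely the gain is $\mu^{-\sigma}$ with $\sigma=\frac12-\frac{d-1}{2q}\cdot\frac1{\cdots}>0$), and then applying $\|Q_p(q)\|_{p\to p}\leqslant K_{2,q}$, $\|(1+T_p)^{-1}\|_{p\to p}\leqslant(1-m_dc_p\tilde\delta)^{-1}$, and $\|\tilde b_n^{\frac1p}\cdot(\mu-\Delta)^{-1}\nabla h\|_p\leqslant\|P_p(\mu,\tilde b_n)\|_{p\to p}\|\nabla h\|_p\leqslant C_3\mu^{-\frac12-\frac1{2p'}}\|\nabla h\|_p$, all from Proposition \ref{est_prop}, one gets
\[
\|\mu\Theta_p(\mu,\tilde b_n)h-\mu(\mu-\Delta)^{-1}h\|_\infty\leqslant C\,\mu^{1-\sigma-\frac12-\frac1{2p'}}\|\nabla h\|_p\longrightarrow0
\]
as $\mu\uparrow\infty$, the exponent being negative for the admissible $q$, and $C$ independent of $n$ because all the constants from Proposition \ref{est_prop} depend only on $\tilde\delta$ and not on $n$ (recall $|\tilde b_n|$ satisfies the form-bound with $\tilde\delta$; cf.\ the remark after Proposition \ref{est_prop}).

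The only genuine subtlety — and the place I would be most careful — is verifying that the constant is honestly $n$-uniform. This rests on two points: first, that $\tilde b_n=\eta_{\varepsilon_n}\ast b_n\in\mathbf F^{\scriptscriptstyle\frac12}_{\tilde\delta}$ with one fixed $\tilde\delta$ for all $n$, which is exactly how the $\varepsilon_n$ were selected at the start of Section \ref{thm1_proof} (Proof of Theorem \ref{thm2}); and second, that the passage from $\mathbf F^{\scriptscriptstyle\frac12}_{\tilde\delta}$ to all the operator bounds in Proposition \ref{est_prop} — in particular to $\|P_p(\mu,\tilde b_n)\|_{p\to p}\leqslant C_3\mu^{-\frac12-\frac1{2p'}}$ via step (\textbf{a}) — goes through with constants depending only on $p,d,\tilde\delta$. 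Both are already in hand, so the proof is essentially a bookkeeping exercise: reproduce the estimate of Proposition \ref{lem_18}(i) with the extra smoothing factor $(\mu-\Delta)^{-\frac12-\frac1{2q}}$ needed to land in $C_\infty$, and observe that every constant appearing is $n$-free. I do not anticipate any conceptual obstacle beyond choosing $q$ (equivalently, the target Bessel-space index) correctly so that the power of $\mu$ is negative while still embedding into $C_\infty$.
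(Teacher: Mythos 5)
Your plan reproduces the paper's reduction to $\mathcal S$, the factorization $Q_p=(\mu-\Delta)^{-\frac12-\frac1{2q}}Q_p(q)$, and the Sobolev gain $\|(\mu-\Delta)^{-\frac12-\frac1{2q}}\|_{p\to\infty}\leqslant c\,\mu^{-(\frac12+\frac1{2q}-\frac{d}{2p})}$, with all constants $n$-uniform because $\tilde b_n\in\mathbf F^{1/2}_{\tilde\delta}$ with a fixed $\tilde\delta$. The gap is in the last factor. You bound $\|\tilde b_n^{1/p}\cdot(\mu-\Delta)^{-1}\nabla h\|_p\leqslant\|P_p\|_{p\to p}\|\nabla h\|_p\leqslant C_3\mu^{-\frac12-\frac1{2p'}}\|\nabla h\|_p$, exactly as in Proposition~\ref{lem_18}(i). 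Multiplying through, the overall exponent on $\mu$ in $\|\mu\Theta_p h-\mu(\mu-\Delta)^{-1}h\|_\infty$ is
\[
1-\Bigl(\tfrac12+\tfrac1{2q}-\tfrac{d}{2p}\Bigr)-\tfrac12-\tfrac1{2p'}=\tfrac{d+1}{2p}-\tfrac1{2q}-\tfrac12 ,
\]
and as $q\downarrow p$ this tends to $\frac{d}{2p}-\frac12$, which is $\geqslant 0$ unless $p>d$. But the hypothesis of Theorem~\ref{thm2} only gives $p\in\bigl(d-1,\frac{2}{1-\sqrt{1-m_d\delta}}\bigr)$; the upper endpoint need not exceed $d$ (indeed $m_d\delta<\frac{4(d-2)}{(d-1)^2}$ is precisely what makes the interval nonempty past $d-1$, and for $\delta$ near that threshold one cannot take $p>d$). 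So the power of $\mu$ in your final bound is not negative in the regime the theorem actually covers, and the argument does not close.

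The paper avoids this loss by noting that every $f\in\mathcal S$ can be written $f=(\lambda-\Delta)^{-1/2}h$ with $h\in\mathcal S$ and $\lambda=\lambda_\delta$ fixed. Then
\[
\Theta_p f-(\mu-\Delta)^{-1}f=-(\mu-\Delta)^{-\frac12-\frac1{2q}}Q_p(q)\,(1+T_p)^{-1}\,\tilde b_n^{\frac1p}(\lambda-\Delta)^{-\frac12}\!\cdot(\mu-\Delta)^{-1}\nabla h ,
\]
where now $\|\tilde b_n^{1/p}(\lambda-\Delta)^{-1/2}\|_{p\to p}$ is a $\mu$-independent constant by estimate~(\textbf{a}), and the full $\mu^{-1}$ comes cleanly from $\|(\mu-\Delta)^{-1}\nabla h\|_p\leqslant\mu^{-1}\|\nabla h\|_p$. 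This recovers an extra $\mu^{-\frac1{2p}}$ compared to your chain, giving the overall exponent $-\frac12+\frac{d}{2p}-\frac1{2q}$, which is strictly negative for $p>d-1$ once $q$ is chosen close enough to $p$. That pre-regularization of the test function is the one missing idea; everything else in your sketch is sound, and in particular your concern about $n$-uniformity is correctly resolved by the fixed $\tilde\delta$.
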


\begin{proof}
Put $\Theta_p \equiv \Theta_p(\mu,\tilde{b}_n)$, $T_p \equiv T_p(\mu,\tilde{b}_n)$.
Since $\mu(\mu-\Delta)^{-1} \overset{s}{\rightarrow}1$ in $C_\infty$, and $\mathcal S$ is dense in $C_\infty$, 
it suffices to show that $\|\mu \Theta_{p}f- \mu(\mu-\Delta)^{-1}f\|_\infty \rightarrow 0
$ for every $f \in \mathcal S$. For each $f \in \mathcal S$ there is $h \in \mathcal S$ such that $f=(\lambda-\Delta)^{-\frac{1}{2}}h$, where $\lambda=\lambda_\delta>0$.
Let $q>p$.
Write
$$
\Theta_{p}f- (\mu-\Delta)^{-1}f =
-(\mu-\Delta)^{-\frac{1}{2}-\frac{1}{2q}} Q_{p}(q)   
\bigl(1+T_{p} \bigr)^{-1} b^{\frac{1}{p}} (\lambda-\Delta)^{-\frac{1}{2}} \cdot (\mu-\Delta)^{-1}\nabla h.
$$
Now, arguing as in the proof of Proposition \ref{lem_18}(ii), but using estimates
$$
\|(\mu-\Delta)^{-\frac{1}{2}-\frac{1}{2q}}\|_{p \rightarrow \infty} \leqslant c \mu^{-\frac{1}{2}+\frac{d}{2p}-\frac{1}{2q}}, \quad c<\infty, \quad \text{ and } \quad \|Q_{p}(q)\|_{p \rightarrow p} \leqslant \tilde{K}_{2,q}<\infty \quad \text{(see \eqref{G_Q_est})},
$$
we obtain
$$
\|\Theta_{p}f - (\mu-\Delta)^{-1}f\|_\infty 
\leqslant C \mu^{-\frac{1}{2}+\frac{d}{2p} -\frac{1}{2q}} \mu^{ -1}  \|\nabla h\|_p.
$$
Since $p>d-1$, choosing $q$ sufficiently close to $p$, we obtain  $$-\frac{1}{2}+\frac{d}{2p}-\frac{1}{2q}-1<-1,$$ so $\mu \Theta_{p} - \mu(\mu-\Delta)^{-1} \overset{s}{\rightarrow} 0$ in $C_\infty$, as needed.
\end{proof}

Now, Proposition \ref{prop_1} verifies condition $3^{\circ}$), and Proposition \ref{lem_4} verifies condition $2^{\circ}$). 
The assertion (\textit{i}) of Theorem \ref{thm2} now follows from the Trotter approximation theorem.

Assertion (\textit{ii}) of Theorem \ref{thm2} follows from Theorem \ref{thm1}(\textit{iii}).

The proof of assertion (\textit{iii})  is standard, and is omitted.

\begin{remark}
We could construct $e^{-t\Lambda_{C_\infty}(b)}$ alternatively as follows:
$$
e^{-t\Lambda_{C_\infty}(b)}:=\bigl(e^{-t\Lambda_p(b)}|_{C_\infty \cap L^p} \bigr)_{C_\infty}^{\clos}
\;\text{ (after a change on a set of measure zero)}, \quad t>0,
$$
where $p \in \bigl(d-1,\frac{2}{1-\sqrt{1-m_d\delta}}\bigr)$.
\end{remark}

\appendix

\section{}

Define $I_n:= \|(b-b_n)\cdot \nabla (\zeta - \Delta)^{-1} f \|_1$.

\begin{estimate}
Let $b \in \mathbf{K}^{d+1}_\delta$. For every $f \in L^1$ and $\Real\,\zeta \geqslant \kappa_d\lambda$,
\begin{equation}
\label{A0}
\tag{A.0}
I_n \to 0 \text{ as } n \uparrow \infty. 
\end{equation}
\end{estimate}

\begin{proof}[Proof of \eqref{A0}]
Since $I_n \leqslant 2 m_d \||b|(\lambda -\Delta)^{-\frac{1}{2}}|f|\|_1 \leqslant 2 m_d \delta \|f\|_1,$ it suffices to prove \eqref{A0} for each $f \in L^1 \cap L^\infty.$ Let $f \in L^1 \cap L^\infty, \lambda > 0$ and $b$ be fixed. Since $|b|(\lambda -\Delta)^{-\frac{1}{2}} |f| \in L^1,$ for a given $\epsilon > 0,$ there exists $\mathcal{K},$ a compact, such that
\[
\|(\mathbf{1}-\mathbf{1}_\mathcal{K})|b|(\lambda -\Delta)^{-\frac{1}{2}} |f| \|_1 \leqslant \epsilon,
\]
where $\mathbf{1}_{\mathcal K}$ is the characteristic function of $\mathcal K$.
Define $I_{\mathcal{K}, n} := \| \mathbf{1}_\mathcal{K} |b - b_n|(\lambda-\Delta)^{-\frac{1}{2}} |f| \|_1.$ Clearly,
\[
I_{\mathcal{K}, n} \leqslant \lambda^{-\frac{1}{2}} \|f\|_\infty \|\mathbf{1}_\mathcal{K} |b - b_n| \|_1.
\]
Since $|b| \in L^1_\loc$ and $\mathcal{K}$ independent of $n=1,2,\dots,$
\[
\|\mathbf{1}_{\mathcal K}|b-b_n|\|_1 \leqslant \|\mathbf{1}_{|b| \geqslant n}(\mathbf{1}_{\mathcal K}|b|)\|_1 \rightarrow 0 \text{ as } n\uparrow \infty.
\]
Therefore, for a given $\epsilon,$ there exists $n_0 = n_0(\epsilon) \geqslant 1,$ such that $I_{\mathcal{K}, n} \leqslant \epsilon$ whenever $n \geqslant n_0,$ and so
\[
I_n \leqslant 3 m_d \epsilon \qquad \forall n \geqslant n_0.
\]
\end{proof}

We use the following pointwise estimates ($x,y \in \mathbb R^d$, $x \neq y$).

\label{lem_lambda_est_proof}

\begin{estimate}
For every $\Real \zeta>0$,
\begin{equation}
\label{lem_lambda_est}
\tag{A.1}
|\nabla (\zeta-\Delta)^{-1}(x,y)| \leqslant m_d  (\kappa_d^{-1}\Real\zeta-\Delta)^{-\frac{1}{2}}(x,y), 
\end{equation}
where $m_d^2 := \pi (2e)^{-1} d^d (d-1)^{1-d},$  $\kappa_d:=\frac{d}{d-1}$.

\medskip

For every $r \in (1,\infty]$
there exists a constant $m_{r,d}<\infty$ such that for all  $\Real \zeta>0$,
\begin{equation}
\label{lem_lambda_est_4}
\tag{A.2}
|\nabla (\zeta-\Delta)^{-1+\frac{1}{2r}}(x,y)| \leqslant m_{r,d} (\kappa_d^{-1}\Real\zeta-\Delta)^{-\frac{1}{2}+\frac{1}{2r}}(x,y). 
\end{equation}
\end{estimate}

\begin{estimate}
For every $\Real \zeta>0$,
\begin{equation}
\label{zeta_est}
\tag{A.3}
|\nabla (\zeta-\Delta)^{-1}(x,y)| \leqslant 
2^{\frac{d}{4}} m_d \biggl(\kappa_d^{-1}
2^{-\frac{1}{2}}|\zeta|
-\Delta\biggr)^{-\frac{1}{2}}(x,y),
\end{equation}
\begin{equation}
\label{zeta_est_0}
\tag{A.4}
|(\zeta-\Delta)^{-\frac{1}{2}}(x,y)| \leqslant 
2^{\frac{d}{4}+\frac{1}{4}} \biggl(
2^{-\frac{1}{2}}|\zeta|
-\Delta\biggr)^{-\frac{1}{2}}(x,y).
\end{equation}
\end{estimate}

\begin{proof}[Proof of \eqref{lem_lambda_est}]
Let $\alpha \in (0,1)$. Set $c(\alpha):=\sup_{\xi>0}\xi e^{-(1-\alpha)\xi^2}~\biggl(=\frac{1}{\sqrt{2}} (1-\alpha)^{-\frac{1}{2}}e^{-\frac{1}{2}}\biggr)$,
so that
\begin{equation}
\tag{$\star$}
\label{req_ineq_99}
\xi e^{-\xi^2} \leqslant c(\alpha)  e^{-\alpha\xi^2} \quad \text{ for all } \xi>0.
\end{equation}
We use the well known formula
\begin{equation*}
(\zeta-\Delta)^{-\frac{\gamma}{2}} (x,y) = \frac{1}{\Gamma\bigl(\frac{\gamma}{2}\bigr)}\int_0^\infty e^{-\zeta t} t^{\frac{\gamma}{2}-1} (4\pi t)^{-\frac{d}{2}}e^{-\frac{|x-y|^2}{4t}} dt, \quad 0<\gamma \leqslant 2,
\end{equation*}
first with $\gamma=2$, and then with $\gamma=1$,
to obtain:
\begin{align*}
|\nabla (\zeta-\Delta)^{-1}(x,y)| 
& \leqslant \int_0^\infty e^{-t\Real\zeta} (4\pi t)^{-\frac{d}{2}} \frac{|x-y|}{2t} e^{-\frac{|x-y|^2}{4t}} dt \\
&\leqslant c(\alpha) \int_0^\infty  e^{-t\Real \zeta} t^{-\frac{1}{2}} (4\pi t)^{-\frac{d}{2}} e^{-\alpha \frac{|x-y|^2}{4t}}dt \qquad \biggl(  \text{By \eqref{req_ineq_99} with $\xi:=\frac{|x-y|}{2\sqrt{t}}$ } \biggr)\\  &\leqslant c(\alpha) \alpha^{-\frac{1}{2}-\frac{d}{2}+1} \int_0^\infty e^{-(\Real \zeta)\alpha t} t^{-\frac{1}{2}} (4\pi t)^{-\frac{d}{2}} e^{-\frac{|x-y|^2}{4t }}  dt \qquad \biggl(\text{change } t/\alpha \text{ to } t \biggr) \\ &= c(\alpha) \alpha^{\frac{1}{2}-\frac{d}{2}} \Gamma\left(\frac{1}{2}\right) \bigl(\alpha\Real \zeta-\Delta \bigr)^{-\frac{1}{2}}(x,y).
\end{align*}
Now, we minimize
$
c(\alpha) \alpha^{\frac{1}{2}-\frac{d}{2}}\Gamma(\frac{1}{2})
$ in $\alpha \in (0,1)$.
The minimum is attained at $\alpha_d=\frac{d-1}{d}~(=:\kappa_d^{-1})$, and is equal to $m_d$.

The proof of \eqref{lem_lambda_est_4} is similar.
\end{proof}

\begin{proof}[Proof of \eqref{zeta_est}]First, suppose that $\Imag~\zeta \leqslant 0$. 
By Cauchy theorem, 
\begin{equation*}
(\zeta-\Delta)^{-1}(x,y)=\int_0^\infty e^{-\zeta t}   (4\pi t)^{-\frac{d}{2}} e^{-\frac{|x-y|^2}{4t}}  dt 
= \int_0^\infty  e^{-\zeta re^{i\frac{\pi}{4}}} e^{-i\frac{\pi}{4}\frac{d}{2}} (4\pi r)^{-\frac{d}{2}} e^{-\frac{|x-y|^2}{4re^{i\frac{\pi}{4}}}} e^{i\frac{\pi}{4}}  dr,  
\end{equation*}
(i.e.~we have changed the contour of integration from $\{t: t \geqslant 0\}$ to $\{re^{i \frac{\pi}{4} }:r \geqslant  0\}$).
Thus,
$$
|\nabla (\zeta-\Delta)^{-1}(x,y)| \leqslant  \int_0^\infty  \left|e^{-\zeta re^{i\frac{\pi}{4}}}\right| (4\pi r)^{-\frac{d}{2}} \left|\frac{x-y}{2r} \right|\biggl| e^{-\frac{|x-y|^2}{4re^{i\frac{\pi}{4}}}}\biggr| dr.
$$
We have 
$$
|e^{-\zeta re^{i\frac{\pi}{4}}}| \leqslant e^{-r\frac{1}{\sqrt{2}}(\Real\,\zeta-\Imag\,\zeta)}, \quad
\bigl|e^{-\frac{|x-y|^2}{4re^{i\frac{\pi}{4}}}} \bigr| \leqslant e^{-\frac{|x-y|^2}{4r}\frac{1}{\sqrt{2}}}, \quad \Real\,\zeta-\Imag\,\zeta \geqslant |\zeta|.
$$
Therefore,
\begin{align*}
|\nabla (\zeta-\Delta)^{-1}(x,y)| \leqslant &
\int_0^\infty  e^{-r\frac{1}{\sqrt{2}}|\zeta|}(4\pi r)^{-\frac{d}{2}} \left|\frac{x-y}{2r} \right| e^{-\frac{|x-y|^2}{4r}\frac{1}{\sqrt{2}}}dr \qquad \bigl(\text{change }r\sqrt{2} \text{ to } r \bigr)  \\ 
=~ & 2^{\frac{d}{4}}\int_0^\infty  e^{-r \frac{1}{2}|\zeta|}(4\pi r)^{-\frac{d}{2}} \left|\frac{x-y}{2r} \right| e^{-\frac{|x-y|^2}{4r}}dr  \\
\leqslant~ &\frac{2^{\frac{d}{4}}m_d}{\Gamma\bigl(\frac{1}{2} \bigr)}\int_0^\infty  e^{-r\kappa_d^{-1}\frac{1}{2}|\zeta|}(4\pi r)^{-\frac{d}{2}} r^{-\frac{1}{2}}  e^{-\frac{|x-y|^2}{4r}}dr \quad \bigl(\text{cf.~proof of \eqref{lem_lambda_est}} \bigr)  \\
=~ & 2^{\frac{d}{4}}m_d \left( \kappa_d^{-1}2^{-1}|\zeta|-\Delta\right)^{-\frac{1}{2}}(x,y) 
\end{align*}
which yields \eqref{zeta_est} for $\Imag\,\zeta \leqslant 0$.
The case $\Imag \zeta >0 $ is treated analogously. 
\end{proof}

\begin{proof}[Proof of \eqref{zeta_est_0}]
First, suppose that $\Imag~\zeta \leqslant 0$. 
By Cauchy theorem, 
\begin{multline*}
(\zeta-\Delta)^{-\frac{1}{2}}(x,y)=\int_0^\infty e^{-\zeta t}  t^{-\frac{1}{2}} (4\pi t)^{-\frac{d}{2}} e^{-\frac{|x-y|^2}{4t}}  dt \\
= \int_0^\infty  e^{-\zeta re^{i\frac{\pi}{4}}} r^{-\frac{1}{2}} e^{-i\frac{\pi}{8}}  e^{-i\frac{\pi}{4}\frac{d}{2}}  (4\pi r)^{-\frac{d}{2}} e^{-\frac{|x-y|^2}{4re^{i\frac{\pi}{4}}}} e^{i\frac{\pi}{4}}  dr,  
\end{multline*}
so we estimate as above:
\begin{align*}
|(\zeta-\Delta)^{-\frac{1}{2}}(x,y)| &\leqslant \int_0^\infty  e^{-r\frac{1}{\sqrt{2}}|\zeta|}r^{-\frac{1}{2}}(4\pi r)^{-\frac{d}{2}} e^{-\frac{|x-y|^2}{4r}\frac{1}{\sqrt{2}}}dr \\
&= 2^{\frac{d}{4}+\frac{1}{4}} \left( 2^{-1}|\zeta|-\Delta\right)^{-\frac{1}{2}}(x,y).
\end{align*}
The case $\Imag \zeta >0 $ is treated analogously. 
\end{proof}

\begin{estimate}
In the proof of Proposition \ref{est_prop} we need the following formula: for every $\Real\,\zeta > 0$, $q \in (1,\infty)$,
\begin{equation}
\label{repr_81}
\tag{A.5}
(\zeta-\Delta)^{-\frac{1}{2q'}} 
=c_q\int_0^\infty t^{-1+\frac{1}{2q}}(t+\zeta-\Delta)^{-\frac{1}{2}}dt, \quad c_q:=\frac{\Gamma\bigl(\frac{1}{2}\bigr)}{\Gamma\bigl(\frac{1}{2q}\bigr)\Gamma\bigl(\frac{1}{2q'}\bigr)},
\end{equation}
\end{estimate}

\end{document}